\newtheorem{Thm}{Theorem}[section]
\newtheorem{Prop}[Thm]{Proposition}
\newtheorem{Lem}[Thm]{Lemma}
\newtheorem{Rem}[Thm]{Remark}
\newtheorem{Ex}[Thm]{Example}
\newtheorem{Ass}[Thm]{Assumption}
\newcommand\mytop[2]{\genfrac{}{}{0pt}{}{#1}{#2}}
\newcommand{\eps}{\varepsilon}
\def\id{\mathrm{id}}
\def\Z{\mathbb{Z}}
\def\N{\mathbb{N}}
\def\R{\mathbb{R}}
\def\C{\mathbb{C}}
\def\dim{\mathrm{dim}}
\newcommand{\cC}{{\mathcal C}}
\newcommand{\cF}{{\mathcal F}}
\newcommand{\cH}{{\mathcal H}}
\newcommand{\cM}{{\mathcal M}}
\newcommand{\ga}{\gamma}
\newcommand{\de}{\delta}
\newcommand{\ka}{\kappa}
\newcommand{\la}{\lambda}
\newcommand{\om}{\omega}
\newcommand{\si}{\sigma}
\newcommand{\Ga}{\Gamma}
\newcommand{\De}{\Delta}
\newcommand{\Om}{\Omega}
\newcommand{\Si}{\Sigma}
\newcommand{\ham}{H_\Omega}
\newcommand{\pa}{\partial}
\def\id{\mathrm{id}}
\newcommand{\beq[1]}{\begin{equation}\label{eq:#1}}
\newcommand{\eeq}{\end{equation}}
\numberwithin{equation}{section}
\DeclareMathOperator{\dist}{dist}
\DeclareMathOperator{\Ker}{Ker}
\DeclareMathOperator{\Range}{Range}
\DeclareMathOperator{\codim}{codim}
\newcommand\set[1]{\left\{\,#1\,\right\}}  
\newcommand\abs[1]{\left|#1\right|} 
\newcommand{\ska}[1]{\left\langle#1\right\rangle} 
\newcommand\norm[1]{\left\Vert#1\right\Vert} 
\begin{document}

\title{Periodic solutions of N-vortex type Hamiltonian systems near the domain boundary}
\author{Thomas Bartsch \and Qianhui Dai  \and Bj\"orn Gebhard}
\date{}
\maketitle

\begin{abstract}
The paper deals with the existence of nonstationary collision-free periodic solutions of singular first order Hamiltonian systems of $N$-vortex type in a domain $\Om\subset\C$. These are solutions $z(t)=(z_1(t),\dots,z_N(t))$ of
\[
\dot{z}_j(t)=-i\nabla_{z_j} \ham\big(z(t)\big),\quad j=1,\dots,N, \tag{HS}
\]
where the Hamiltonian $\ham$ has the form
\[
\ham(z_1,\dots,z_N)
 = -\sum_{\mytop{j,k=1}{j\ne k}}^N \frac{1}{2\pi}\log|z_j-z_k| -\sum_{j,k=1}^N g(z_j,z_k).
\]
The function $g:\Om\times\Om\to\R$ is required to be of class $\cC^3$ and symmetric, the regular part of a hydrodynamic Green function being our model. The Hamiltonian is unbounded from above and below, and the associated action integral is not defined on an open subset of the space of periodic $H^{1/2}$ functions.

Given a closed connected component $\Ga\subset\pa\Om$ of class $\cC^3$ we are interested in periodic solutions of \eqref{eq:HS} near $\Ga$. We present quite general conditions on the behavior of $g$ near $\Ga$ which imply that there exists a family of periodic solutions $z^{(r)}(t)$, $0<r<\overline{r}$, with arbitrarily small minimal period $T_r\to0$ as $r\to0$, and such that the ``point vortices'' $z_j^{(r)}(t)$ approach $\Ga$ as $r\to0$. The solutions are choreographies, i.e.\ $z_j^{(r)}(t)$ moves on the same trajectory as $z_1^{(r)}(t)$ with a phase shift. We can also relate the speed of each vortex with the curvature of $\Ga$.
\end{abstract}

{\bf MSC 2010:} Primary: 37J45; Secondary: 37N10, 76B47

{\bf Key words:} $N$-vortex dynamics; singular first order Hamiltonian systems; periodic solutions; choreography

\section{Introduction}\label{sec:intro}
The motion of $N$ point vortices in a planar domain $\Om\subset\C$ is described by a Hamiltonian system of the form
\begin{equation}\label{eq:HS}
\kappa_j\dot{z}_j(t)=-i\nabla_{z_j} \ham\big(z(t)\big),\quad j=1,\dots,N, \tag{HS}
\end{equation}
where $i\in\C$ is the imaginary unit; $z_j\in\Om$ is the position and $\kappa_j\in\R\setminus\{0\}$ the strength of the $j$-th vortex, respectively.

If $\Om$ is bounded and simply connected, the Hamiltonian $\ham$ is determined by the Green's function for the Dirichlet Laplacian
\[
G(w,z) = -\frac1{2\pi}\log|w-z| - g(w,z)\qquad\text{for }w,z\in\Om,\ w\ne z,
\]
and the so called Robin function $h$, which is the leading term of its regular part, i.e.
\[
h:\Om\to\R,\quad h(z)=g(z,z).
\]
To be more precise, one has for two or more vortices
\begin{equation}\label{eq:ham}
  \ham(z) = \sum_{\mytop{j,k=1}{j\ne k}}^N \ka_j\ka_k G(z_j,z_k)
             - \sum_{k=1}^N\ka_k^2h(z_k),
\end{equation}
defined on
\[
\cF_N(\Om)=\{(z_1,\cdots,z_N)\in\Om^N: z_j\neq z_k\,\,\textrm{for}\,\,j\neq k\}.
\]
The motion of a single vortex in $\Om$ is completely determined by the Robin function, because then $\ham=-\ka_1^2h$. The Hamiltonian $\ham$, the so-called Kirchhoff-Routh path function, arises when making a point vortex ansatz for an incompressible non-viscous fluid described by the 2D-Euler equations. This goes back to Kirchhoff \cite{kirchhoff:1876}, Routh \cite{routh:1881} and C.C.~Lin \cite{lin:1941a,lin:1941b}. The work of Lin covers also the case of multiply connected domains, in which the Dirichlet Green's function is replaced by more general hydrodynamic Green's functions. It should be mentioned that the Hamiltonian as in \eqref{eq:ham} corresponds to the case when $\pa\Om$ is a solid boundary, i.e.\ there is no flux through the boundary. One can find the derivation of $\ham$ and more aspects of vorticity methods also in recent literature, e.g.\ \cite{flucher-gustafsson:1997,majda-bertozzi:2001,marchioro-pulvirenti:1994,newton:2001,saffman:1992}.

We would also like to mention that \eqref{eq:HS} with Hamiltonians having the same logarithmic term as in \eqref{eq:ham} plus possibly different regular summands appear in several other applications from mathematical physics. Examples are the Ginzburg-Landau-Schr\"odinger equation or the Landau-Lifshitz-Gilbert equation; see\cite{Colliander-Jerrard:1998, Kurzke-etal:2011} and references therein.

A major difficulty in all these settings is that, except in a few special cases, the Hamiltonian is unbounded from above and below, singular, not integrable, and has noncompact, not even metrically complete level surfaces. Therefore even the existence of equilibria is difficult to prove, and results have been achieved only quite recently; see \cite{bartsch-pistoia:2014,bartsch-pistoia-weth:2010,delpino-etal:2005,kuhl:2015,kuhl:2016}. Next to equilibria the existence and properties of periodic solutions are of great importance, of course. If $\Om=\C$ or if $\Om$ is radially symmetric like a disk or an annulus, there are many results about special periodic solutions of \eqref{eq:HS};
see \cite{aref-etal:2002,newton:2001} and the references therein. For general domains very little is known. It is even unknown whether one can apply the generalized version of the Weinstein-Moser theorem \cite{Bartsch:1997, moser:1976, Weinstein:1973} to find periodic solutions near the equilibria from \cite{bartsch-pistoia:2014,bartsch-pistoia-weth:2010,delpino-etal:2005,kuhl:2015,kuhl:2016}. Also the existence of periodic solutions on fixed energy surfaces seems to be out of reach because these are not compact, not even complete as metric spaces. Standard methods from critical point theory do not apply either nor does Floer theory, so the existence of periodic solutions for the $N$-vortex problem in an arbitrary domain is at present far from being solved.

For non-stationary periodic solutions of \eqref{eq:HS}, the only results we are aware of are \cite{bartsch:2016,bartsch-dai:2016,bartsch-gebhard:2016,bartsch-saccet:2016}. In \cite{bartsch-dai:2016} a local family of periodic solutions oscillating around a stable critical point of the Robin function $h$ with arbitrarily small minimal periods has been obtained in case $\ka_1=\cdots=\ka_N$. The solutions are choreographic, i.e.\ all vortices $z_j$ move on the same trajectory with a phase shift. After a suitable scaling they look like the classical $N$-gon configuration of Thomson. This result has been significantly improved in \cite{bartsch-gebhard:2016}, where, firstly, general vorticities $\ka_1,\dots,\ka_N\ne0$ are allowed; secondly, the shape of the vortex configurations near the equilibrium resembles after rescaling a prescribed relative equilibrium in the plane, not just Thomson's $N$-gon; and thirdly, these solutions lie on global connected continua of periodic solutions. In \cite{bartsch:2016} the local result has been refined in case the critical point of $h$ is nondegenerate. These results hold in generic bounded domains due to \cite{caffarelli-friedman:1985,micheletti-pistoia:2014}. Another type of periodic solutions has been found in \cite{bartsch-saccet:2016} for the 2-vortex problem. There the two vortices rotate around their center of vorticity, while the center of vorticity itself is close to a level line of the Robin function $h$, i.e.\ a trajectory of the 1-vortex system. Also these solutions exist in generic bounded domains.

In the present paper we find a new type of periodic choreographies for \eqref{eq:HS} with $H$ as in \eqref{eq:ham}. Whereas the results from\cite{bartsch:2016,bartsch-dai:2016,bartsch-gebhard:2016,bartsch-saccet:2016} yield periodic solutions where all vortices are close to each other, here we obtain periodic solutions where the vortices are far away from each other. The vortices move on a trajectory close to a prescribed connected component $\Ga$ of the domain boundary $\pa\Om$, separated by phase shifts. Of course, if $N\to\infty$ then the vortices will be close to their nearest neighbors. For simplicity we only treat the case of identical vortices but we do allow arbitrarily many vortices, a general, even unbounded domain, and we consider general functions $g$ in \eqref{eq:ham}.

The behavior of point vortices close to solid boundaries is of interest in oceanographic modelling, for instance. There the system \eqref{eq:HS} serves as a basic model for the interaction of ocean eddies with a given topography, e.g.\ islands in front of a coastline. In the papers \cite{crowdy-marshall:2005,crowdy-marshall:2006} Crowdy and Marshall use conformal mappings between multiply connected domains to compute trajectories of a single vortex passing various types of coast-island configurations. Another example is the influence of a bay embedded into a straight coastline on the motion of a point vortex, which is investigated in \cite{ryzhov-koshel:2016} by Ryzhov and Koshel. Our results are of a different nature in that we consider the interaction of possibly many vortices with a bounded coastline, and we find periodic solutions.

The article is organized as follows. We state and discuss our results in Section \ref{sec:results}. In Section~\ref{sec:prelim} we present the main idea of the proof and reformulate the problem as an equation between Hilbert spaces. The proof of the main theorem is then given in Section \ref{sec:solving}. Lastly we prove in Section~\ref{sec:simply_connected_domains} that the theorem applies in particular to simply connected bounded domains with $\cC^{3,\alpha}$ boundary.

\section{Statement of results}\label{sec:results}

Let $\Om\subset\C$ be a domain with nonempty boundary $\pa\Om$, and let $\Ga\subset\pa\Om$ be a closed connected component of the boundary of class $\cC^3$. Clearly $\Ga$ is diffeomorphic to $S^1$. Let $L$ denote the length of $\Ga$, $\nu:\Ga\to\R^2$ the exterior unit normal, and $\ka:\Ga\to\R$ the curvature of $\Ga$ with respect to $\nu$. Set $d(z):=\dist(z,\Ga)$ and fix $\de>0$ sufficiently small such that the orthogonal projection
\[
p:\,\,\Om_\de:=\{z\in\Om: d(z)\le\de\}\to\Ga
\]
is well defined. We may assume that $|\ka(p)|<\frac1{\de}$ for all $p\in\Ga$. Consider a symmetric $\cC^3$ function $g:\Om\times\Om\to\R$, and set
\[
  G(w,z) = -\frac1{2\pi}\log|w-z| - g(w,z)\qquad\text{for }w,z\in\Om,\ w\ne z.
\]
We also need the functions $h,\rho:\Om\to\R$ defined by $h(z)=g(z,z)$ and $\rho(z)=\exp(-2\pi h(z))$. In the classical point vortex case $G$ is the Green function of the Dirichlet Laplace operator in $\Om$, $h$ the Robin function, and $\rho$ the harmonic radius (see \cite{flucher:1999}). We require the following behavior of these functions near $\Ga$.

\begin{Ass}\label{ass}
  $\rho$ can be extended to a function $\rho\in\cC^3(\Om\cup\Ga)$ by setting $\rho(q)=0$ for $q\in\Ga$. Moreover, $\nabla\rho(q)=-2\nu(q)$, and $\rho''(q)=-2\ka(q)\cdot\id$ for every $q\in\Ga$. For every $\eps>0$ the function $G$ satisfies
  \[
    \abs{\nabla_1 G(w,z)}+\abs{\nabla_1^2 G(w,z)} = O(d(z)),\quad\text{and}\quad
    \nabla_2\nabla_1 G(w,z) = O(1)\nu(p(z))^T+O(d(z))
  \]
  as $d(z)\to 0$ uniformly on the set $\set{(w,z)\in\overline{\Om}\times\Om_\de:\abs{w-z}\ge \eps}$.
\end{Ass}

It follows from assumption~\ref{ass} that $\rho(z)=2d(z)-\ka(p(z))d(z)^2+o(d(z)^2)$ as $d(z)\to0$, and  $h(z)\to\infty$ as $d(z)\to0$. We define the bundle
\[
  \cH^1 = \bigcup_{T>0}\{T\}\times H^1(\R/T\Z,\C^{N})
\]
containing all periodic functions having a square-integrable derivative and parametrized by the period. We equip $\cH^1$ with the chart $\cH^1\to \R\times H^1(\R/2\pi\Z,\C^N)$, $(T,u)\mapsto \left(T,u\big(\cdot\frac{T}{2\pi}\big)\right)$. Now we are ready to state our main theorem about the Hamiltonian system \eqref{eq:HS} with Hamiltonian
\[
  \ham(z) = \sum_{\mytop{j,k=1}{j\ne k}}^N G(z_j,z_k) - \sum_{k=1}^N h(z_k).
\]

\begin{Thm}\label{thm:main}
  If assumption~\ref{ass} holds then there exists $\overline{r}>0$ and a $\cC^1$ map
  \[
  (0,\overline{r})\to \cH^1,\quad r\mapsto\left(T_r\hspace{2pt},\hspace{2pt}z^{(r)}\right)=\left(2\pi rL, \big(z_1^{(r)},\cdots,z_N^{(r)}\big)\right)
  \]
  such that $z^{(r)}$ is a periodic solution of \eqref{eq:HS} with minimal period $T_r=2\pi rL$ for each $r$. Moreover, these periodic solutions possess the following properties:

  \noindent (1) $z_k^{(r)}(t)=z_1^{(r)}\left(t+\frac{(k-1)T_r}{N}\right)$ for every $k=1,\dots,N$.

  \noindent (2) The rescaled function $v^{(r)}(t):=z_1^{(r)}(2\pi rt)$ converges in $H^1(\R/L\Z,\C)$ as $r\to0$ towards a parametrization $\gamma$ of $\Ga$ according to arc-length. More precisely, setting $\nu=\nu\circ\gamma$, $\kappa=\kappa\circ\gamma$ by abuse of notation, there holds
  \[
  	v^{(r)}=\gamma-r\nu+o(r)\quad \text{in }H^1(\R/L\Z,\C),
  \]
  and
  \[
  	\dot{v}^{(r)}=(1-r\kappa)i\nu+o(r)\quad \text{uniformly in }t\in\R.
  \]

  \noindent (3) The distance $d(v^{(r)}(t))$ satisfies
  \[
    d(v^{(r)}) = r+\frac{1}{2}\kappa r^2+o(r^2)
  \]
  as $r\to0$ uniformly in $t\in\R$.
\end{Thm}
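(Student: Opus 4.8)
The plan is to reformulate the search for these near-boundary periodic choreographies as a fixed-point or zero-finding problem in the Hilbert-space bundle $\cH^1$, exploiting the choreography symmetry to reduce the $N$-vortex problem to a single-vortex equation. Because property (1) forces $z_k^{(r)}(t)=z_1^{(r)}(t+(k-1)T_r/N)$, I would first restrict attention to the subspace of $H^1(\R/T\Z,\C^N)$ consisting of such symmetric loops; on this subspace the full Hamiltonian system \eqref{eq:HS} collapses to a single equation for $z_1$, with the interaction terms $G(z_j,z_k)$ becoming explicit functions of the phase-shifted copies. The singular logarithmic interaction stays bounded here precisely because distinct vortices remain separated by the phase shift as they travel along $\Ga$, so the pairwise distances $|z_j-z_k|$ stay bounded below, which is exactly the regime in which Assumption~\ref{ass}'s estimates on $\nabla_1 G$ and $\nabla_2\nabla_1 G$ (valid on $|w-z|\ge\eps$) apply.

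Next I would introduce the scaling dictated by the statement: write the period as $T_r=2\pi rL$, rescale time by $v^{(r)}(t)=z_1^{(r)}(2\pi r t)$ so that $v^{(r)}$ lives in the fixed space $H^1(\R/L\Z,\C)$, and expand the leading candidate as $v=\gamma-r\nu+O(r^2)$ where $\gamma$ is the arc-length parametrization of $\Ga$. The key structural input is that $\rho(z)=2d(z)-\ka d(z)^2+o(d^2)$ (derived from Assumption~\ref{ass} via $\nabla\rho=-2\nu$, $\rho''=-2\ka\cdot\id$), so the singular self-interaction term $-h(z_k)=\frac1{2\pi}\log\rho(z_k)$ produces, after differentiation, a dominant normal force $\nabla_{z_k} h \sim -\frac1{2\pi}\frac{\nabla\rho}{\rho}\sim \frac{\nu}{2\pi d}$ that blows up like $1/d$ near $\Ga$. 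This is the driving term: balancing it against $\dot z_1 = -i\nabla_{z_1}\ham$ at scale $d\sim r$ is what pins down the period $T_r=2\pi rL$ and yields the leading tangential velocity $\dot v^{(r)}\approx i\nu$, i.e. motion along $\Ga$ at unit speed. I would set up the rescaled equation so that, after dividing by the natural $1/r$ factor, the leading-order limit equation is exactly solved by $\gamma$ with $\dot\gamma=i\nu$ (the arc-length geodesic flow along the boundary), and the interaction terms and the regular part $g$ contribute only at order $r$ or higher by the $O(d)$ estimates in Assumption~\ref{ass}.

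With the limit problem identified, I would realize the finite-$r$ solutions via an implicit-function-theorem or Lyapunov–Schmidt argument around the limit. Define a $\cC^1$ map $F(r,v,T)$ whose zeros are the rescaled solutions, arrange that $F(0,\gamma,2\pi L)=0$ encodes the limit equation, and check that the linearization $D_{(v,T)}F$ at the limit is an isomorphism — modulo the obvious $S^1$ time-translation symmetry and the choice of period, which I would handle either by passing to a slice transverse to the translation action or by including $T$ as an unknown balanced against a normalization condition fixing the phase. The nondegeneracy of $\Ga$ as a closed curve (it is diffeomorphic to $S^1$ and of class $\cC^3$) should make the limiting linear operator Fredholm of index zero with kernel exactly the translation direction, so quotienting by that symmetry restores invertibility. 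Once the implicit function theorem provides a $\cC^1$ branch $r\mapsto(T_r,v^{(r)})$, properties (2) and (3) follow by reading off the first-order Taylor expansion of the branch: $v^{(r)}=\gamma-r\nu+o(r)$ and $\dot v^{(r)}=(1-r\kappa)i\nu+o(r)$ come from the normal correction at order $r$, and $d(v^{(r)})=r+\tfrac12\kappa r^2+o(r^2)$ from substituting this expansion into $\rho(z)=2d-\ka d^2+o(d^2)$.

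The main obstacle I anticipate is the interplay between the singularity and the scaling: one must verify that after the rescaling the self-interaction term, which is genuinely singular as $d\to0$, extends to a $\cC^1$ (indeed $\cC^1$-in-parameters) map on a neighborhood of $r=0$ including the boundary value $r=0$. This requires showing that the singular $1/d$ behavior is exactly absorbed by the $r$-scaling — i.e. that $\frac1r\nabla h(z)$ and its derivatives extend continuously to $r=0$ once $z$ is written in the $v^{(r)}$ variables — and that the mixed and higher derivatives appearing in the linearization remain controlled. The precise form $\rho\in\cC^3$ with the prescribed first and second derivatives on $\Ga$ in Assumption~\ref{ass} is what makes this extension possible, and checking the required uniform $\cC^1$ estimates on the rescaled nonlinearity, together with the invertibility of the linearized operator modulo symmetry, will be the technical heart of the argument.
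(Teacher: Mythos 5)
Your overall architecture (choreography reduction, time rescaling with $T_r=2\pi rL$, identification of a limit equation solved by a boundary parametrization, implicit function theorem around the limit orbit) matches the paper's strategy, and you correctly identify the technical heart: extending the rescaled singular term to a $\cC^1$ map up to $r=0$ using $\rho=2d-\ka d^2+o(d^2)$. (The paper does this not by linearizing at $\gamma\subset\pa\Om$ itself but by conjugating with a collar diffeomorphism $\chi_r:\Om_\de\to\Om_r$, so that the limit solution $u^0$ sits at distance $\de$ inside $\Om$ and the singularity is absorbed into $\chi_r$; your version would need an equivalent normal-coordinate blow-up, but that is a reparametrization of the same idea.)

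There is, however, a genuine gap at the step where you claim that the linearization is ``Fredholm of index zero with kernel exactly the translation direction, so quotienting by that symmetry restores invertibility.'' This does not follow: for an index-zero Fredholm operator with one-dimensional kernel the range still has codimension one, and restricting the domain to a slice transverse to the kernel makes the operator injective but not surjective. Concretely, the paper computes (Lemma 4.1) that $\Ker D_uF(0,v)=\R\dot v$ while $\Range D_uF(0,v)=\nu(p(v))^\perp$, and the cokernel direction $\nu(p(v))$ is \emph{not} aligned with the tangent direction $\dot v=(1-\de\ka)i\nu$ as an $L^2$ function (the weight $1-\de\ka$ is nonconstant). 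So after the Lyapunov--Schmidt step one is left with a one-dimensional bifurcation equation that your proposal never solves. The paper closes it by exploiting the residual variational structure: the identity $\ska{F(r,u),\la(r,u)i\dot u}_{L^2}=0$ with the explicit weight $\la(r,u)=\frac{\de-rd\ka}{\de^2(1-d\ka)}$ (Lemma 4.3, a consequence of the $S^1$-invariance of $u\mapsto\int H_r(U)\,dt$), together with the transversality $L^2=\R\,\la(r,u)i\dot u\oplus\nu(p(v))^\perp$ (Lemma 4.4), forces the remaining component of $F$ to vanish automatically. Your fallback of treating $T$ as a free unknown could in principle supply the missing direction (varying the period of the limit system moves the orbit normally to $\Ga$), but you neither choose between the two options nor verify the required transversality, and with $T$ freed you would still have to recover the exact relation $T_r=2\pi rL$ claimed in the statement. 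Finally, the second-order assertions in (2) and (3) require the explicit computation $\pa_ru^{(0)}=-\frac{\de}{2}\ka\nu$ (Lemma 4.5); ``reading off the Taylor expansion of the branch'' presupposes this but does not produce it.
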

 \begin{Rem}\rm
a) The theorem shows that for $T>0$ small enough, the system \eqref{eq:HS} has a $T$-periodic solution with all vortices moving on the same trajectory. At first order the trajectory has distance $r=T/2\pi L$ from $\Ga$. The second order term in (3) tells us that the trajectory of the vortices comes closer to $\Ga$ in regions where $\Ga$ has negative curvature, and the vortices speed up by (2). On the other hand, near positively curved parts of $\Ga$ the trajectory increases the distance to $\Ga$ and the vortices slow down. In any case the vortices try to use shortcuts near curved parts of the boundary.

b) Observe that all conclusions in Theorem~\ref{thm:main} require only that $\Ga$ and $g$ are of class $\cC^2$. Therefore the theorem can be generalized to the case where $\Ga$, $g$ and the extension of $\rho$ in Assumption~\ref{ass} are only of class $\cC^2$ by approximation.

c) It is clear that the factor 2 in $\nabla\rho(q)=-2\nu(q)$ and $\rho''(q)=-2\ka(q)\cdot\id$ from assumption~\ref{ass} can be replaced by any $c>0$ independent of $q\in\Ga$. If $c=c(q)$ depends on $q\in\Ga$ this will effect the motion of the vortices significantly.
\end{Rem}

\begin{Ex}\rm
As an illustration of our result consider the case of the unit disk $\Om=B_1(0)$. In this case the Green's and Robin function are explicitly known (see \eqref{eq:Greens_function_ball} below). Periodic solutions of $N$ vortices of equal strength arranged in a polygonal configuration exist (not only) near the boundary, those are,
\[
z_k(t)=se^{i\om(s) t}e^{i\frac{2\pi(k-1)}{N}}, \quad k=1,\cdots,N,
\]
with the radius $s\in(0,1)$ and uniform angular velocity
\[
\om(s) = \frac{1}{\pi s^2}\left(\frac{N}{1-s^{2N}}-\frac{N+1}{2}\right).
\]
This solution has the minimal period $2\pi rL=4\pi^2 r$ if
\[
  r=\frac{s^2(1-s^{2N})}{N-1+(N+1)s^{2N}}.
\]
This is satisfied for $s\sim 0$ and $s\sim 1$. Close to $\pa B_1(0)$, i.e.\ for $s\sim 1$, one can verify that  $d(z)- r-\frac{1}{2}\kappa r^2=(1-s)- r-\frac{1}{2}r^2=o(r^2)$ as $r\to0$. This coincides with our theorem.
\end{Ex}

We expect that Assumption~\ref{ass} holds for $g$ being the regular part of a hydrodynamic Green function in any domain with a bounded component $\Ga\subset\pa\Om$ of class $\cC^3$ but haven't found this in the literature. Here we prove it in the simply connected case.

\begin{Prop}\label{prop:simply_connected}
  Assumption \ref{ass} holds for $g$ being the regular part of the Green function of Dirichlet Laplace operator in a simply connected bounded domain with $\cC^{3,\alpha}$ boundary.
\end{Prop}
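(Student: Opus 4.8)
The plan is to reduce everything to the explicit model on the unit disk $\mathbb{D}=B_1(0)$ via a Riemann map and the conformal invariance of the Green function. Let $\phi:\Om\to\mathbb{D}$ be biholomorphic. Since $\pa\Om=\Ga$ is of class $\cC^{3,\al}$, the Kellogg--Warschawski theorem gives $\phi\in\cC^{3,\al}(\ov\Om,\ov{\mathbb{D}})$ with $\phi'\ne0$ on $\ov\Om$, so that $\phi$ restricts to a $\cC^{3,\al}$ diffeomorphism $\Ga\to\pa\mathbb{D}$. The Dirichlet Green function transforms as $G(w,z)=G_{\mathbb{D}}(\phi(w),\phi(z))$ with $G_{\mathbb{D}}(a,b)=-\frac1{2\pi}\log\abs{\frac{a-b}{1-\ov b a}}$, and passing to the diagonal yields the classical formula
\[
  \rho(z)=\exp(-2\pi h(z))=\frac{1-\abs{\phi(z)}^2}{\abs{\phi'(z)}}.
\]
Because $\abs{\phi}=1$ on $\Ga$ and $\abs{\phi'}\ge c>0$, this already shows that $\rho$ extends to $\Om\cup\Ga$ with $\rho|_\Ga=0$, with regularity $\cC^{2,\al}$ (dividing by the merely $\cC^{2,\al}$ factor $\abs{\phi'}$ costs one derivative). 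This is enough: by part~b) of the remark following Theorem~\ref{thm:main} the $\cC^3$ hypothesis may be replaced by $\cC^2$, and all remaining assertions concern at most second derivatives on $\Ga$.

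Next I would compute $\nabla\rho$ on $\Ga$. Since $\phi$ is conformal and orientation preserving, it sends the exterior unit normal $\nu(z)$ to the exterior unit normal of $\pa\mathbb{D}$ up to the scale $\abs{\phi'(z)}$, which gives $\nu(z)=\ov{\phi'(z)}\phi(z)/\abs{\phi'(z)}$ on $\Ga$. Writing $\rho=B/A$ with $B=1-\abs{\phi}^2$ and $A=\abs{\phi'}$ and using $B|_\Ga=0$, one has $\nabla\rho=\nabla B/A$ on $\Ga$; since $\nabla_\zeta(1-\abs\zeta^2)=-2\zeta$ on $\pa\mathbb{D}$, the chain rule through the conformal $\phi$ (whose differential is $\abs{\phi'}$ times a rotation) yields $\nabla B=-2\abs{\phi'}\nu$ and hence $\nabla\rho=-2\nu$ on $\Ga$, as required.

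For the Hessian I would exploit that, once $\rho|_\Ga=0$ and $\nabla\rho|_\Ga=-2\nu$ are known, most components come for free. Differentiating the vector field $\nabla\rho=-2\nu$ in a tangential direction $T$ along $\Ga$ and using the shape operator $D_T\nu=\ka T$ gives $\rho''(q)[T]=-2\ka T$, whence $\rho''(T,T)=-2\ka$ and, by symmetry, $\rho''(T,\nu)=0$; thus $\rho''(q)=-2\ka\,T\otimes T+\rho''(\nu,\nu)\,\nu\otimes\nu$, and it only remains to show $\rho''(\nu,\nu)=-2\ka$. Equivalently, as $\Delta\rho=\rho''(T,T)+\rho''(\nu,\nu)$, it suffices to prove $\Delta\rho=-4\ka$ on $\Ga$. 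Using $\Delta(\phi\ov\phi)=4\abs{\phi'}^2$ one finds $\Delta B=-4A^2$, and with $B|_\Ga=0$, $\nabla B=-2A\nu$ the quotient rule gives $\Delta\rho|_\Ga=-4A+4\,\pa_\nu A/A=-4\big(\abs{\phi'}-\pa_\nu\abs{\phi'}/\abs{\phi'}\big)$. So $\Delta\rho=-4\ka$ reduces to the classical formula for the curvature of the image of $\pa\mathbb{D}$ under the conformal map $\phi^{-1}$, i.e. $\ka=\abs{\phi'}-\pa_\nu\abs{\phi'}/\abs{\phi'}$ on $\Ga$; verifying this identity (equivalently matching it against $\ka=\abs{\psi'}^{-1}\big(1+\operatorname{Re}(e^{i\theta}\psi''/\psi')\big)$ for $\psi=\phi^{-1}$) is the main computational obstacle.

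Finally, the estimates on $G$ follow from the same explicit formula. From $G=G_{\mathbb{D}}\circ(\phi\times\phi)$ and Wirtinger calculus one gets, with $a=\phi(w)$, $b=\phi(z)$,
\[
  \pa_a G_{\mathbb{D}}(a,b)=-\frac1{4\pi}\,\frac{1-\abs b^2}{(a-b)(1-\ov b a)},
\]
so $\nabla_1 G$ equals $\abs{\phi'(w)}$ times a quantity of size $O(1-\abs b^2)=O(d(z))$. For $\abs{w-z}\ge\eps$ one has $\abs{a-b}\ge c\eps$ and, since $a,b\in\ov{\mathbb{D}}$ cannot simultaneously approach the same boundary point, also $\abs{1-\ov b a}\ge c'>0$; hence the denominator is bounded below and $\abs{\nabla_1 G}+\abs{\nabla_1^2 G}=O(d(z))$ uniformly on $\set{(w,z)\in\ov\Om\times\Om_\de:\abs{w-z}\ge\eps}$, as further $w$-differentiation cannot remove the factor $1-\abs b^2$. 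Differentiating once more in $z$, the only term losing this factor is the one hitting the numerator, whose $z$-gradient is, by the computation of $\nabla B$ above, a bounded multiple of $\nu(p(z))$, while every other term keeps a factor $1-\abs b^2=O(d(z))$. This gives exactly $\nabla_2\nabla_1 G=O(1)\nu(p(z))^T+O(d(z))$, completing the verification of Assumption~\ref{ass}.
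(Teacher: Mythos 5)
Your overall strategy---pull everything back to the unit disk through a boundary-regular Riemann map and read Assumption~\ref{ass} off explicit formulas---is the same as the paper's, and your treatment of the Green function is sound (arguably cleaner: you differentiate the M\"obius form $-\tfrac{1}{2\pi}\log\abs{\tfrac{a-b}{1-\ov{b}a}}$, where the paper uses the image-charge decomposition with $R(y)=y/\abs{y}^2$; the identity $\abs{1-\ov{b}a}^2=\abs{a-b}^2+(1-\abs{a}^2)(1-\abs{b}^2)$ makes your lower bound on the denominator immediate). However, two steps that the paper actually proves are left open in your write-up. The first concerns the regularity of $\rho$: Assumption~\ref{ass} demands a $\cC^3$ extension of $\rho$ to $\Om\cup\Ga$, and your derivative count ($\cC^{3,\al}$ numerator over the $\cC^{2,\al}$ factor $\abs{\phi'}$ gives only $\cC^{2,\al}$) is too pessimistic. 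The paper shows that the sole term of $\rho'''$ involving $\phi^{(4)}$ carries the factor $\rho$ itself, and $\abs{\rho\,\phi^{(4)}}=O(d^\al)$ by a Cauchy-integral estimate combined with $\rho=2d+o(d)$, so the third derivatives do extend continuously and $\rho\in\cC^3(\ov\Om)$. Your fallback to part~b) of the remark after Theorem~\ref{thm:main} proves a statement weaker than the proposition as written, and that remark itself rests on an approximation argument that is nowhere carried out, so it cannot substitute for the verification.

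The second gap is the normal-normal component of the Hessian. Your tangential computation $\rho''(q)[T]=-2\ka T$, hence $\rho''(T,T)=-2\ka$ and $\rho''(T,\nu)=0$, is fine and matches the paper's step $\rho''i\nu=D(-2\nu)[i\nu]$. But the remaining entry $\rho''(\nu,\nu)=-2\ka$ is precisely what you defer to the unverified identity $\ka=\abs{\phi'}-\pa_\nu\abs{\phi'}/\abs{\phi'}$ on $\Ga$, which you yourself label the main computational obstacle. The paper sidesteps this computation entirely by quoting the Bandle--Flucher expansion $\rho(p-d\nu(p))=2d-\ka(p)d^2+o(d^2)$, which yields $\ska{\rho''\nu,\nu}=-2\ka$ directly. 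Until you either cite that expansion or actually establish the conformal curvature identity (e.g.\ by matching it against $\ka=\abs{\psi'}^{-1}\bigl(1+\operatorname{Re}(e^{i\theta}\psi''/\psi')\bigr)$ for $\psi=\phi^{-1}$), the claim $\rho''=-2\ka\cdot\id$ on $\Ga$ is not proved.
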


\section{Preliminaries}\label{sec:prelim}

\subsection{The Scaling idea}\label{sec:scal}

In order to obtain periodic solutions of \eqref{eq:HS} close to $\Ga$ we introduce a suitable scaling. For $0<r<\de$ we define the diffeomorphism $\chi_r:\Om_\de\to\Om_r:=\{z\in\Om:d(z)\leq r\}$ by
\[
  \chi_r(z)=\frac{r}{\de}z+\left(1-\frac{r}{\de}\right)p(z)=p(z)-\frac{r}{\de}d(z)\nu(p(z)).
\]
Clearly $\chi_r$ pushes $z$ to $p(z)$ as $r\to0$. Then $z(t)=(z_1(t),\ldots,z_N(t))\in \cF_N(\Om_r)$ is a solution of \eqref{eq:HS} if and only if $u(t)=(u_1(t),\ldots,u_N(t))$ defined by
\[
  u_k(t):=\chi_r^{-1}(z_k(2\pi rt))
\]
is a solution of the system
\begin{equation}\label{eq:HSr}
  \dot{u}_k = 2\pi rD\chi_r^{-1}(\chi_r(u_k))[-i\nabla_{z_k}\ham(\chi_r(u_1),\cdots,\chi_r(u_N))],
  \quad k=1,\cdots,N.
  \tag{${\textrm{HS}}_r$}
\end{equation}
On a suitable set of $(u_1,\dots,u_n)$ we will see that \eqref{eq:HSr} posseses a limiting system \eqref{eq:HS0}, whose solutions can be extended to solutions of \eqref{eq:HSr} for $r>0$ small.

\begin{Lem}\label{lem:deri} For $z\in\Om_\de$ we write $\nu=\nu(p(z))$, $\ka=\ka(p(z))$, $\ka'=D\ka(p(z))[i\nu(p(z))]$, by abuse of notation. Then one has
\begin{gather*}
  \nabla d=-\nu,\quad Dp=\frac{1}{1-d\ka}i\nu (i\nu)^T,\quad D\nu=\ka Dp,\quad \nabla\ka=\frac{\ka'}{1-d\ka}i\nu,\\
  D\chi_r(z)=\frac{r}{\de}\nu\nu^T+\frac{\de-rd\ka}{\de(1-d\ka)}i\nu(i\nu)^T.
\end{gather*}
\end{Lem}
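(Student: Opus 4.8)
The plan is to work in Fermi (tubular-neighborhood) coordinates along $\Ga$ and then read off every formula by a short chain-rule computation. Fix an arc-length parametrization $\gamma:\R/L\Z\to\Ga$ oriented so that its unit tangent is $\gamma'=i\nu(\gamma)$; with this orientation the curvature $\ka$ of $\Ga$ with respect to the exterior normal $\nu$ is characterized by the Frenet-type identity $\frac{d}{ds}\bigl(\nu(\gamma(s))\bigr)=\ka\,\gamma'=\ka\,i\nu$. I would then introduce the map
\[
  \Psi:(\R/L\Z)\times[0,\de]\to\Om_\de,\qquad \Psi(s,t)=\gamma(s)-t\,\nu(\gamma(s)),
\]
which for $\de$ small (this is where the standing hypothesis $|\ka|<\frac1\de$ enters) is a $\cC^2$ diffeomorphism satisfying $p(\Psi(s,t))=\gamma(s)$ and $d(\Psi(s,t))=t$. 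Differentiating gives the two columns $\pa_s\Psi=(1-t\ka)\,i\nu$ and $\pa_t\Psi=-\nu$.

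Since $\{i\nu,\nu\}$ is an orthonormal basis of $\R^2$, the Jacobian $D\Psi=\bigl[(1-t\ka)\,i\nu\mid -\nu\bigr]$ has determinant $1-t\ka>0$, and its inverse is read off immediately; as $\Psi^{-1}(z)=(s(z),d(z))$, the rows of $(D\Psi)^{-1}$ are exactly the gradients of the coordinate functions, namely
\[
  \nabla s=\frac{1}{1-d\ka}\,i\nu\qquad\text{and}\qquad \nabla d=-\nu .
\]
This already yields the first claimed identity. For the remaining ones I would simply differentiate the relations $p=\gamma\circ s$, $\nu=\nu\circ\gamma\circ s$ and $\ka=\ka\circ\gamma\circ s$ (all understood via the abuse of notation in the statement) by the chain rule. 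Using $\gamma'=i\nu$ one gets $Dp=\gamma'(s)\,(\nabla s)^T=\frac{1}{1-d\ka}i\nu(i\nu)^T$; using the Frenet identity one gets $D\nu=\frac{d}{ds}(\nu\circ\gamma)\,(\nabla s)^T=\ka\,i\nu\,(\nabla s)^T=\ka\,Dp$; and since $\frac{d}{ds}(\ka\circ\gamma)=D\ka(p)[\gamma']=D\ka(p)[i\nu]=\ka'$ one gets $\nabla\ka=\frac{\ka'}{1-d\ka}i\nu$.

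Finally I would insert these into $\chi_r(z)=p(z)-\frac{r}{\de}d(z)\,\nu(p(z))$ and apply the product rule:
\[
  D\chi_r = Dp-\frac{r}{\de}\Bigl(\nu\,(\nabla d)^T+d\,D\nu\Bigr)
          = \frac{1}{1-d\ka}\,i\nu(i\nu)^T+\frac{r}{\de}\,\nu\nu^T-\frac{r}{\de}\cdot\frac{d\ka}{1-d\ka}\,i\nu(i\nu)^T,
\]
where $\nu(\nabla d)^T=-\nu\nu^T$ and $D\nu=\ka Dp$ were used; collecting the two $i\nu(i\nu)^T$ terms over the common denominator $\de(1-d\ka)$ produces the factor $\frac{\de-rd\ka}{\de(1-d\ka)}$, which is the last formula. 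The only genuine content is the geometric set-up: checking that $\Psi$ is a bona fide diffeomorphism onto $\Om_\de$, and pinning down the orientation so that $\gamma'=i\nu$ and $\frac{d}{ds}(\nu\circ\gamma)=\ka\,i\nu$ hold with the correct sign of $\ka$ (consistent, for instance, with $\ka=1$ for the unit disk, which also matches the expansion $\rho=2d-\ka d^2+o(d^2)$). Once these conventions are fixed, everything else is the inversion of a $2\times2$ matrix with orthogonal columns followed by the chain rule, so I do not expect any substantive obstacle.
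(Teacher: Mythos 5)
Your proof is correct and reaches every formula by essentially the same elementary tubular-neighborhood/chain-rule computation as the paper; the only cosmetic differences are that you obtain $Dp$ by inverting the Jacobian of the Fermi chart $\Psi(s,t)=\gamma(s)-t\nu(\gamma(s))$, whereas the paper differentiates the equivalent identity $d(z)\nu(p(z))=p(z)-z$ directly in the tangential direction, and that you differentiate $\chi_r=p-\frac{r}{\de}d\,\nu\circ p$ by the product rule where the paper uses the form $\chi_r=\frac{r}{\de}\,\id+\bigl(1-\frac{r}{\de}\bigr)p$. Your orientation conventions ($\gamma'=i\nu$, $\frac{d}{ds}(\nu\circ\gamma)=\ka\, i\nu$, hence $\ka=1$ on the unit circle) match the paper's, so all signs come out right.
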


\begin{proof} The derivatives of $d$ and $\nu$ are of course known and only listed here for convenience. Obviously we have $Dp(z)[v]=0$ for $v\in \R\nu(p(z))$. If $v\in \R i\nu(p(z))$, differentiation of $d(z)\nu(p(z))=p(z)-z$ yields
\[
  d(z)\ka(p(z)) Dp(z)[v]=Dp(z)[v]-v
\]
so the derivative of $p$ follows. The expression for $\nabla \ka$ follows directly. Finally, the definition of  $\chi_r$ yields:
\[
  D\chi_r(z) = \frac{r}{\de}\cdot\id+\left(1-\frac{r}{\de}\right)Dp
             = \frac{r}{\de}\nu\nu^T+\frac{\de-rd\ka}{\de(1-d\ka)}i\nu(i\nu)^T.
\]
\end{proof}

In the presence of only one vortex with strength $\ka_1=1$, the Hamiltonian $\ham$ is given by $\ham(z)=-h(z)=\frac{1}{2\pi}\log\rho(z)$. Thus the scaled system \eqref{eq:HSr} becomes
\begin{equation}\label{HSr1}
  \dot{u} = 2\pi rD\chi_r^{-1}(\chi_r(u))\big[i\nabla h(\chi_r(u)\big].
  \tag{${\textrm{HS}}^1_r$}
\end{equation}
Lemma \ref{lem:deri} implies
\[
  rD\chi_r^{-1}(\chi_r(u)) = \de\nu\nu^T+ r\de\frac{1-d\ka}{\de-rd\ka}i\nu(i\nu)^T,
\]
where $\nu$ and $\ka$ are evaluated at $p(u)$ and $d$ at $u$. Moreover, Assumption \ref{ass} gives
\begin{equation}\label{eq:gradient_of_h}
  \nabla h(\chi_r(u)) = -\frac{1}{2\pi}\frac{\nabla\rho(\chi_r(u))}{\rho(\chi_r(u))}
                      = \frac{\de\nu-rd\ka\nu+o(r)}{2\pi rd+o(r)}
\end{equation}
as $r\to 0$. Thus, the limit system of \eqref{HSr1} at $r=0$ is
\begin{equation}\label{eq:HS0}
  \dot{u} = \big(1-d(u)\ka(p(u))\big)\frac{\de}{d(u)} i\nu(p(u)).
  \tag{${\textrm{HS}}_0$}
\end{equation}

\begin{Prop}\label{prop:period}
Any solution $u$ of the limit system \eqref{eq:HS0} is periodic and has constant distance $d(u)$ to $\Ga$. If $\eps\equiv d(u)\in(0,\de]$ denotes the distance, then the period of $u$ is $\frac{\eps}{\de}L$.
\end{Prop}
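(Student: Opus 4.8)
The plan is to show that the distance to $\Ga$ is conserved along every solution of \eqref{eq:HS0}, and then to read off the period from the induced motion of the foot point $p(u)$ on $\Ga$. Everything reduces to the derivative formulas of Lemma~\ref{lem:deri} together with the fact that $i\nu$ is $\nu$ rotated by a right angle.

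First I would prove that $d(u(t))$ is constant. Using $\nabla d=-\nu$ from Lemma~\ref{lem:deri} and the equation \eqref{eq:HS0},
\[
\frac{d}{dt}\,d(u) = \nabla d(u)\cdot\dot u = -(1-d\ka)\frac{\de}{d}\,\nu\cdot(i\nu) = 0,
\]
since $\nu\cdot(i\nu)=0$ as a real inner product on $\R^2\cong\C$. Hence $d(u)\equiv\eps$ for some $\eps\in(0,\de]$, and the solution stays on the level curve $\set{d=\eps}$, which (because $|\ka|<\frac1{\de}$) is a compact $\cC^2$ curve diffeomorphic to $\Ga\cong S^1$. The right-hand side of \eqref{eq:HS0} is $\cC^1$ on $\Om_\de\setminus\Ga$, so solutions exist locally and uniquely; being tangent to and confined to this compact invariant curve, each solution then exists for all $t\in\R$.

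Next I would track the foot point $p(u(t))$. By Lemma~\ref{lem:deri} one has $Dp=\frac{1}{1-d\ka}i\nu(i\nu)^T$, and since $|i\nu|=1$ we get $(i\nu)^T\dot u=(1-d\ka)\frac{\de}{d}$, so the factors $1-d\ka$ cancel:
\[
\frac{d}{dt}\,p(u) = Dp(u)\,\dot u = \frac{\de}{d}\,i\nu(p(u)) = \frac{\de}{\eps}\,i\nu(p(u)).
\]
Choosing the arc-length parametrization $\gamma:\R/L\Z\to\Ga$ with unit tangent $\dot\gamma=i\nu$, this says $p(u(t))=\gamma(s(t))$ with $\dot s\equiv\de/\eps$. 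Thus $p(u)$ traverses $\Ga$ at constant arc-length speed $\de/\eps$, so a full loop of total length $L$ takes time $\frac{\eps}{\de}L$, and $t\mapsto p(u(t))$ is injective on any such interval.

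Finally, in the tubular coordinates $u=p(u)-d(u)\,\nu(p(u))=p(u)-\eps\,\nu(p(u))$ (from $p(z)-z=d(z)\nu(p(z))$), so with $\eps$ fixed the point $u(t)$ is recovered from $p(u(t))$ through fixed $\cC^2$ data on $\Ga$. Periodicity of $p(u)$ with period $\frac{\eps}{\de}L$ therefore yields periodicity of $u$ with the same period, and the injectivity of $p\circ u$ over one loop shows this period is minimal. The computation is direct and I anticipate no serious obstacle; the only points requiring care are the orthogonality $\nu\cdot(i\nu)=0$, where reading $i$ as a rotation of $\R^2\cong\C$ is essential, and the orientation convention identifying $i\nu$ with the unit tangent of $\Ga$.
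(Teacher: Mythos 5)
Your proposal is correct and follows essentially the same route as the paper: conservation of $d(u)$ via $\nabla d=-\nu$ and $\ska{\nu,i\nu}_{\R^2}=0$, and then the period read off from the fact that $p\circ u$ traverses $\Ga$ at constant arc-length speed $\de/\eps$ (the paper phrases this as $L=\int_0^{T_\eps}|p(u)'|\,dt=\de T_\eps/\eps$, which is the same computation as your $\dot s\equiv\de/\eps$). Your added remarks on global existence via compactness of the level curve and on reconstructing $u$ from $p(u)$ make explicit some points the paper leaves implicit, but they do not change the argument.
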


Recall that $L$ denotes the length of $\Ga$.

\begin{proof} Indeed, any solution $u(t)$ of \eqref{eq:HS0} satisfies
\[
\frac{d}{dt}d(u(t))=\ska{-\nu,\frac{(1-d\ka)\de}{d}i\nu}_{\R^2}=0
\]
and thus moves on a closed curve with constant distance to $\Ga$, in particular it is periodic. Let $\eps=d(u(t))$ be the distance and $T_\eps>0$ be the minimal period of $u$. Since $p\circ u:\R/T_\eps\mathbb{Z}\to\Ga$ is
a parameterization of $\Ga$, there holds:
\[
\begin{aligned}
L &= \int_0^{T_\eps}\big|p(u(t))'\big|\,dt
   =\int_0^{T_\eps}\bigg|\frac{1}{1-d(u(t))\ka(p(u(t)))}\dot{u}(t)\bigg|\,dt\\
  &=\int_0^{T_\eps}\bigg|\frac{\de i\nu(p(u(t)))}{d(u(t))}\bigg|\,dt
   =\frac{\de T_\eps}{\eps}.
\end{aligned}
\]
\end{proof}
\vspace{-5mm}
We denote by $u^0$ the solution of \eqref{eq:HS0} having distance $d(u^0)\equiv \de$ and therefore period $L$.

Now we turn to the case of $N\in\N$ identical vortices. Let $\Si_N$ be the permutation group of $N$ symbols, which acts orthogonally on $\R^{2N}=\C^N$ via
$$
  \si *(z_1,\ldots,z_N)=(z_{\si^{-1}(1)},\ldots,z_{\si^{-1}(N)}),\quad \si\in\Si_N.
$$
Since $\ham$ is invariant under the action of $\Si_N$, we can look for a special type of choreographic solutions of \eqref{eq:HSr}, namely solutions $U(t)$ that satisfy
$$
  U(t+L/N)=\si^{-1}*U(t),
$$
where $\si=(1~2~\ldots~N)\in\Si_N$ is the cyclic permutation. The ansatz $U=(u,u(\cdot+\theta_2),\ldots,u(\cdot+\theta_N))$, with $\theta_j=(j-1)\frac{L}{N}$, $j=1,\ldots,N$,
reduces our problem to finding $L$-periodic solutions $u$ of
\begin{equation}\label{eq:reduce}
  \dot{u}
    =2\pi rD\chi_r^{-1}(\chi_r(u))
      \Big[-i\nabla_{z_1}\ham\big(\chi_r(u),\chi_r(u(\cdot+\theta_2)),\ldots,\chi_r(u(\cdot+\theta_N))\big)\Big].
\tag{${\textrm{HS}}_r'$}
\end{equation}
Indeed, the invariance of $\ham$ under $\Si_N$ implies
\begin{equation}\label{eq:symmetry_of_nabla_H}
  \nabla_{1}\ham(\si^{-(k-1)}*z) = \nabla_{\si^{k-1}(1)}\ham(z)=\nabla_k\ham(z),\quad k=1,\ldots,N.
\end{equation}
Consequently $U$ solves \eqref{eq:HSr} provided $u$ is a $L$-periodic solution of \eqref{eq:reduce}. It remains to solve \eqref{eq:reduce} for $r\sim 0$.

\subsection{The Hilbert space setting}

Let $L^2=L^2_L(\C)=L^2(\R/L\Z,\C)$ be the Hilbert space of $L$-periodic square integrable functions with scalar
product
\[
  \langle u,v\rangle_{L^2}=\int_0^L\langle u(t),v(t)\rangle_{\R^2}dt=\int_0^L\textrm{Re}\big(\overline{u(t)}v(t)\big)dt
\]
and associated norm $\|\cdot\|_{L^2}$. Furthermore let $H^1=H^1_L(\C)=H^1(\R/L\Z,\C)$ denote the Sobolev space of $L$-periodic, absolutely continuous functions having a square integrable derivative equipped with the scalar product
\[
\langle u,v\rangle=\langle u,v\rangle_{L^2}+\langle \dot{u},\dot{v}\rangle_{L^2}
\]
and induced norm $\|\cdot\|$. The group $S^1=\R/L\Z$ acts on both spaces via time shift, i.e.
\[
\theta\ast u(t)=u(t+\theta),\quad \theta\in S^1.
\]
Finding an $L$-periodic solution of \eqref{eq:reduce} is equivalent to finding a zero of
\begin{equation}\label{eq:Fru}
  F(r,u) = \dot{u}+2\pi rD\chi_r^{-1}(\chi_r(u))
            \Big[i\nabla_{z_1}\ham\big(\chi_r(u),\chi_r(\theta_2*u),\ldots,\chi_r(\theta_N*u)\big)\Big],
\end{equation}
defined on a subset of $\R\times H^1$ and mapping into $L^2$. Recall that our solution $u^0$ of the one-vortex system \eqref{eq:HS0} has the constant distance $d(u^0)\equiv\de$ to the boundary component $\Ga$. The projection $p$ onto $\Ga$ is also defined on a bounded open neighbourhood $V$ of $\Om_\de$. Since $H^1$ embeds into the space of continuous functions, we can fix a constant $\de_1>0$, such that for all $u\in H^1$ with $\norm{u-u^0}\leq \de_1$ the following holds: $u(t)\in  V$, $d(u(t))\geq \de_1$, and
\[
  \abs{p(\theta_k *u(t))-p(\theta_j*u(t))}\geq \de_1 ,\quad t\in\R,1\le k < j\le N.
\]
There exist $r_0>0$ and $c>0$ satisfying $\chi_r(u(t))\in V$ and
\begin{equation}\label{eq:Fwelldefined}
  \abs{\chi_r(\theta_k*u(t))-\chi_r(\theta_j*u(t))}\geq c
\end{equation}
whenever $r\in(0,r_0)$, $u\in H^1$ with $\norm{u-u^0}\leq\de_1$, $t\in\R$ and $1\le k < j\le N$. We may also assume that $\de_1<\norm{u^0}$. Then $F(r,\cdot)$ is defined for $r\in(0,r_0)$ on the tubular neighborhood $B_{\de_1}(\cM)\subset H^1$ of the $S^1$-orbit $\cM:=S^1*u^0$.

\begin{Lem}\label{lem:FC1}
The map $F:[0,r_0)\times B_{\de_1}(\cM)\to L^2$ defined for $r>0$ as in \eqref{eq:Fru}, and for $r=0$ by
$$
  F(0,u) = \dot{u}-\big(1-d(u)\ka(p(u))\big)\frac{\de}{d(u)} i\nu(p(u)) = \dot{u}-\frac{(1-d\ka)\de}{ d}i\nu,
$$
is of class $\cC^1$ with derivatives
$$
D_uF(0,u)[w] = \dot{w}+\de\left(\frac{\ka}{d}\nu\ska{i\nu,w}_{\R^2}
                 +\frac{\ka'}{1-d\ka}i\nu\ska{i\nu,w}_{\R^2}-\frac{1}{ d^2}i\nu\ska{\nu,w}_{\R^2}\right),
$$
and
$$
  \pa_rF(0,u)=\frac{1}{2}\ka'd\nu-\frac{1}{2}(1-d\ka)\ka i\nu.
$$
\end{Lem}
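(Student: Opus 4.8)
The smoothness of $F$ and the derivative formulas are immediate for $r>0$, where $F(r,\cdot)$ is a composition of $\cC^2$ maps: by \eqref{eq:Fwelldefined} the vortices stay mutually separated and, since $d(\chi_r(u))=\frac r\de d(u)>0$, away from $\Ga$, so $\nabla_{z_1}\ham$ is smooth along the configuration. The entire content of the lemma is therefore the $\cC^1$-extension across $r=0$ together with the identification of the two limiting partial derivatives. The plan is to substitute the explicit expression $rD\chi_r^{-1}(\chi_r(u))=\de\nu\nu^T+r\de\frac{1-d\ka}{\de-rd\ka}i\nu(i\nu)^T$ from Lemma~\ref{lem:deri} into \eqref{eq:Fru}, to split $\nabla_{z_1}\ham=-\nabla h(\chi_r(u))+2\sum_{k=2}^N\nabla_1 G(\chi_r(u),\chi_r(\theta_k*u))$, and to treat the singular self-term and the regular interaction terms separately. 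The $u$-dependence enters only through the superposition operators built from $d,p,\nu,\ka$ and from $\rho,\nabla\rho,\rho''$ and the derivatives of $G$ evaluated along the affine-in-$r$ curve $\chi_r(u)=p(u)-\frac r\de d(u)\nu(p(u))$; since $\Ga$ and $g$ are $\cC^3$ and $H^1(\R/L\Z,\C)$ embeds into $\cC^0$ as a Banach algebra, these operators are of class $\cC^1$ into the relevant spaces, so it suffices to track the scalar dependence on $r$.

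Writing $\nabla h=-\frac1{2\pi}\nabla\rho/\rho$, the self-contribution to $F$ is $(\de\nu\nu^T+r\de\frac{1-d\ka}{\de-rd\ka}i\nu(i\nu)^T)[i\,\nabla\rho(\chi_r(u))/\rho(\chi_r(u))]$. Because $\rho\in\cC^3(\Om\cup\Ga)$ vanishes on $\Ga$ and $r\mapsto\chi_r(u)$ is affine, the scalar $\rho(\chi_r(u))$ and the numerators $\ska{\nu,\nabla\rho(\chi_r(u))}$ and $\ska{\nu,i\nabla\rho(\chi_r(u))}$ are $\cC^2$ in $r$. The key device is the Hadamard factorization $g(r)=r\,g_2(r)$ with $g_2(0)=g'(0)$ for each such scalar vanishing at $0$; this removes the apparent $1/r$ singularity, the denominator contributing $g_2(0)=\frac{2}{\de}d(u)\ne0$. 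The tangential piece uses $\ska{i\nu,i\nabla\rho}=\ska{\nu,\nabla\rho}\to-2$ and, after factorization, produces the limit value $-\frac{(1-d\ka)\de}d i\nu$ and, on differentiating once more in $r$, the term $-\frac12(1-d\ka)\ka\,i\nu$. The delicate piece is the normal one, $\de\nu\,\ska{\nu,i\nabla\rho(\chi_r(u))}/\rho(\chi_r(u))$, which is genuinely of $0/0$ type: here $\frac{d}{dr}\ska{\nu,i\nabla\rho(\chi_r(u))}|_0=\frac{2\ka d}{\de}\ska{\nu,i\nu}=0$ shows the limit is $0$, while the $r$-derivative is controlled by $\frac{d^2}{dr^2}\ska{\nu,i\nabla\rho(\chi_r(u))}|_0=\frac{d^2}{\de^2}\ska{\nu,i\,\rho'''(p(u))[\nu,\nu]}$.

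The apparent dependence on the unspecified third jet $\rho'''$ is removed by symmetry: $\rho'''(p(u))$ is a symmetric $3$-tensor, and the tangential derivative of the boundary identity $\rho''|_\Ga=-2\ka\,\id$ gives $\rho'''(p(u))[i\nu]=-2\ka'\,\id$, whence $\ska{i\nu,\rho'''(p(u))[\nu,\nu]}=\rho'''(p(u))[i\nu,\nu,\nu]=-2\ka'$ and $\ska{\nu,i\,\rho'''(p(u))[\nu,\nu]}=2\ka'$; this yields precisely the normal term $\frac12\ka' d\,\nu$ with no residual $\rho'''$. For the interaction terms, Assumption~\ref{ass} and the separation \eqref{eq:Fwelldefined} give $\nabla_1 G(\chi_r(u),\chi_r(\theta_k*u))=O(r)$ with $\nabla_1^2 G=O(r)$ and $\nabla_2\nabla_1 G=O(1)\nu^T+O(r)$, so each such term is $\cC^1$ in $(r,u)$ and vanishes at $r=0$, contributing nothing to $F(0,u)$. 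The one subtle point is that they also contribute nothing to $\pa_rF(0,u)$: this derivative equals $4\pi\de\,\nu\nu^T[i\,\pa_r\sum_k\nabla_1 G|_0]$ with $\pa_r\nabla_1 G(\chi_r(u),\chi_r(\theta_k*u))|_0=-\frac{d(\theta_k*u)}\de c_k$, where $c_k$ is the limiting column direction of $\nabla_2\nabla_1 G$. Applying Assumption~\ref{ass} in both orders of the arguments and using $g(w,z)=g(z,w)$, the two rank-one boundary limits $c_k\,\nu(p(\theta_k*u))^T$ and $\nu(p(u))\,\tilde c_k^T$ must agree, forcing $c_k\parallel\nu(p(u))$; hence $\nu\nu^T[i c_k]=\nu\ska{\nu,i c_k}=0$ and the interaction drops out.

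It then remains to assemble the pieces. With $F(0,u)=\dot u-\frac{(1-d\ka)\de}d i\nu$ established, the stated formula for $D_uF(0,u)$ follows by differentiating this expression directly and inserting $\nabla d=-\nu$, $\nabla\ka=\frac{\ka'}{1-d\ka}i\nu$, $D\nu=\ka\,Dp$ and $Dp=\frac1{1-d\ka}i\nu(i\nu)^T$ from Lemma~\ref{lem:deri}, simplifying with $\frac1d-\ka=\frac{1-d\ka}d$; the formula for $\pa_rF(0,u)$ is the sum $\frac12\ka' d\,\nu-\frac12(1-d\ka)\ka\,i\nu$ of the two self-term contributions. Continuity of both derivatives up to $r=0$ is read off from the same factorizations, which exhibit each coefficient as a quotient of $\cC^1$ functions with nonvanishing denominator. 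I expect the main obstacle to be the normal component of the self-term: being of $0/0$ type, its $\cC^1$-extension and the value $\frac12\ka' d\,\nu$ of its $r$-derivative force the use of a second-order expansion together with the symmetry identity $\rho'''[\nu,\nu,i\nu]=-2\ka'$; the vanishing of the interaction contribution to $\pa_rF(0,u)$, resting on the symmetry of $g$, is the second point needing care.
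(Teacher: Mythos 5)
Your proposal is correct, and its skeleton coincides with the paper's: the same splitting of $F$ into the singular self-term and the regular interaction terms, the same expansions of $\rho,\nabla\rho,\rho''$ along $\chi_r(u)$ from Assumption~\ref{ass}, and the same device --- symmetry of $g$ combined with the rank-one structure of $\nabla_2\nabla_1G$ --- to make the interaction terms drop out of $\pa_rF(0,u)$. Two differences are worth recording. First, you obtain $D_uF(0,u)$ by establishing the $\cC^1$-extension across $r=0$ through Hadamard factorization of the $0/0$ quotients and then simply differentiating the formula for $F(0,\cdot)$ with the identities of Lemma~\ref{lem:deri}; the paper instead computes $\lim_{r\to0}D_uF_1(r,u)[w]$ head-on in \eqref{eq:calc0}--\eqref{eq:calc3}. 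Your shortcut is legitimate precisely because the factorization exhibits every coefficient as a quotient of $\cC^1$ functions of $(r,u)$ with nonvanishing denominator, so the partial derivatives extend continuously to $r=0$ and the extension must agree with the derivative of the limit; it is shorter, at the price of hiding the uniformity in $u$ that the paper's computation displays. Second, your treatment of $\pa_rF_1(0,u)$ --- the second-order expansion of the normal $0/0$ component and the identity $\rho'''[\nu,\nu,i\nu]=-2\ka'$ obtained by tangential differentiation of $\rho''|_\Ga=-2\ka\cdot\id$ --- is more explicit than the paper's ``similar computations show'', and your coefficients $\tfrac12\ka'd\,\nu$ and $-\tfrac12(1-d\ka)\ka\, i\nu$ check out. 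One small caveat: Assumption~\ref{ass} gives only an $O(1)$ bound on the column factor of $\nabla_2\nabla_1G$, not convergence, so your ``limiting column direction $c_k$'' need not exist; this is harmless, since the quantity actually needed, $\ska{\nu(p(u)),\,i\,\nabla_2\nabla_1G\,\nu(p(\theta_k*u))}_{\R^2}$, is $O(r)$ by your transposition argument whether or not the limit exists --- which is exactly how the paper phrases this step.
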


\begin{proof} For $r\in(0,r_0)$ we have
$$
\begin{aligned}
  F(r,u) &= \dot{u} + 2\pi rD\chi_r^{-1}(\chi_r(u))
              \Big[i\sum_{j=2}^N\,2\nabla_1G(\chi_r(u),\chi_r(\theta_j\ast u))-i\nabla h(\chi_r(u_1))\Big]\\
         &= \dot{u}+F_1(r,u)+2\sum_{j=2}^NF_j(r,u)
\end{aligned}
$$
where we set
$$
\begin{aligned}
  \psi(r,u) &:= rD\chi_r^{-1}(\chi_r(u))=\de\nu\nu^T+ r\de\frac{1-d\ka}{\de-rd\ka}i\nu(i\nu)^T,\\
   F_1(r,u) &:= -2\pi\psi(r,u)\big[i\nabla h(\chi_r(u_1))\big],\\
   F_j(r,u) &:= 2\pi\psi(r,u)\big[i\nabla_1G(\chi_r(u),\chi_r(\theta_j\ast u))\big],\quad j=2,\ldots,N.
\end{aligned}
$$
It is clear that $F$ restricted to $(0,r_0)\times B_{\de_1}(\cM)$ is $\cC^1$. From case the $N=1$ in Section~\ref{sec:scal} we already know that
$$
  F_1(r,u)=-\frac{(1-d\ka)\de}{d}i\nu+o(1)
$$
as $r\to 0$. This holds in $L^2$ and uniformly in $u\in B_{\de_1}(\cM)$ due to  \ref{ass}. Regarding $F_j$ for $j\ge 2$ Assumption~\ref{ass} gives
$$
F_j(r,u)=O(d(\chi_r(\theta_j*u)))=O(r),
$$
because $\chi_r(u(t))$ and $\chi_r(\theta_j*u(t))$ are uniformly bounded away from each other by \eqref{eq:Fwelldefined}. This proves the continuity of $F$. Next a straightforward calculation shows
\begin{equation}\label{eq:Dupsi}
D_u\psi(r,u)[w]=\alpha\left(i\nu\nu^T+\nu(i\nu)^T\right)+r\beta (i\nu)(i\nu)^T,
\end{equation}
with
\begin{equation}\label{eq:definition_of_alpha}
\alpha=\frac{(\de-r)\ka\de}{(1-d\ka)(\de-rd\ka)}\ska{i\nu,w}_{\R^2},\quad\beta=\frac{(\de-r)\de}{(\de-rd\ka)^2}\left(\ka\ska{\nu,w}_{\R^2}-\frac{d\ka'}{1-d\ka}\ska{i\nu,w}_{\R^2}\right).
\end{equation}
So again by Assumption~\ref{ass} we have for $j\ge2$:
$$
D_uF_j(r,u)[w]=O(r)+O(1)\nu(p(\theta_j*u))^TD\chi_r(\theta_j*u)[\theta_j*w]=O(r)
$$
uniformly in $u$.

For the partial derivative with respect to $r$ we use the symmetry of $G$ to deduce for $j\ge2$:
$$
  \begin{aligned}
  \pa_rF_j(r,u)
    &= O(r)+O(1)\cdot(i\nu)^T\nabla_2\nabla_1 G(\chi_r(u),\chi_r(\theta_j*u))\nu(p(\theta_j*u))\\
    &= O(r)+O(1)\cdot\nu(p(\theta_j*u))^T\nabla_2\nabla_1 G(\chi_r(\theta_j*u),\chi_r(u))i\nu\\
    &= O(r).
\end{aligned}
$$

Concerning $F_1$ we have
\begin{equation}\label{eq:calc0}
\begin{aligned}
  D_uF_1(r,u)[w] &= D_u\psi(r,u)[w]\left[i\frac{\nabla\rho(\chi_r(u))}{\rho(\chi_r(u))}\right]\\
  &\hspace{1cm}
    +\psi(r,u)\left[i\frac{\rho''(\chi_r(u))}{\rho(\chi_r(u))}
       -i\frac{\nabla\rho(\chi_r(u))\nabla\rho(\chi_r(u))^T}{\rho(\chi_r(u))^2}\right]D\chi_r(u)[w].
\end{aligned}
\end{equation}
Using \eqref{eq:Dupsi} and expansions for $\rho$, $\nabla\rho$ according to \ref{ass} one derives for the first term
\begin{equation}\label{eq:calc1}
  D_u\psi(r,u)[w]\left[i\frac{\nabla\rho(\chi_r(u))}{\rho(\chi_r(u))}\right]
   = -\frac{2\alpha}{\rho(\chi_r(u))}\nu+\alpha\ka\nu-\frac{\beta\de}{d}i\nu+o(1).
\end{equation}
For the second term observe that with some $\xi=\xi(u(t))\in \left(0,\frac{r}{\de}\right)$:
$$
\begin{aligned}
  (i\nu)^T\rho''(\chi_r(u))(i\nu)
    &=-2\ka+\rho'''(p-\xi d\nu)[-\nu,i\nu,i\nu]rd\\
    &= -2\ka+\pa_{i\nu}(\rho''(p)[-\nu,i\nu])rd+o(r) = -2\ka+o(r),
\end{aligned}
$$
which implies
\begin{equation}\label{eq:calc2}
  \psi(r,u)\left[i\frac{\rho''(\chi_r(u))}{\rho(\chi_r(u))}\right]D\chi_r(u)[w]
    = \frac{2\ka(\de-rd\ka)}{1-d\ka}\ska{i\nu,w}_{\R^2}\frac{\nu}{\rho(\chi_r(u))}+o(1).
\end{equation}
Next
\begin{equation}\label{eq:calc3}
  \psi(r,u)\left[-i\frac{\nabla\rho(\chi_r(u))\nabla\rho(\chi_r(u))^T}{\rho(\chi_r(u))^2}\right]D\chi_r(u)[w]
    = -\frac{\de^2(1-d\ka)}{d^2(\de-rd\ka)}\ska{\nu,w}_{\R^2}i\nu+o(1).
\end{equation}
Combining \eqref{eq:calc0}-\eqref{eq:calc3}, the first order expansion of $\rho$ and the definition of $\alpha,\beta$ in \eqref{eq:definition_of_alpha} yields
$$
\begin{aligned}
  D_uF_1(r,u)[w]
    &= \left(\frac{2\ka(\de-rd\ka)}{1-d\ka}\ska{i\nu,w}_{\R^2}-2\alpha\right)\frac{\nu}{\rho(\chi_r(u))}+\alpha\ka\nu\\
    &\hspace{40pt}
        -\left(\frac{\beta\de}{d}+\frac{\de^2(1-d\ka)}{d^2(\de-rd\ka)}\ska{\nu,w}_{\R^2}\right)i\nu+o(1)\\
    &= \left(\frac{r\ka\de^2(1-2d\ka)+o(r)}{(\de-rd\ka)(rd+o(r))}+\ka^2\de\right)\frac{\ska{i\nu,w}_{\R^2}}{1-d\ka}\nu\\
    &\hspace{40pt}+\frac{\de\ka'}{1-d\ka}i\nu\ska{i\nu,w}_{\R^2}-\frac{\de}{d^2}i\nu\ska{\nu,w}_{\R^2}+o(1)\\
    &=\frac{\ka\de}{d}\nu\ska{i\nu,w}_{\R^2}+\frac{\de\ka'}{1-d\ka}i\nu\ska{i\nu,w}_{\R^2}
         -\frac{\de}{ d^2}i\nu\ska{\nu,w}_{\R^2}+o(1).
\end{aligned}
$$
This holds again uniformly for $u\in B_{\de_1}(\cM)$. Similar computations show that
$$
  \pa_rF_1(r,u) = \frac12\ka'd\nu-\frac12(1-d\ka)\ka i\nu+o(1)
$$
uniformly. Therefore $F$ is $\cC^1$ on all of $[0,r_0)\times B_{\de_1}(\cM)$ with $D_uF(0,u)[w]=\dot{w}+D_uF_1(0,u)[w]$ and $\pa_rF(0,u)=\pa_rF_1(0,u)$.
\end{proof}

\section{Proof of Theorem~\ref{thm:main}}\label{sec:solving}

Every element $v\in\cM=S^1*u^0$ is a solution of \eqref{eq:HS0} and hence of
$$
  F(0,v) = \dot{v}-\big(1-d(v)\ka(p(v))\big)\frac{\de}{d(v)} i\nu(p(v)) = 0.
$$
We shall prove that $\cM$ is a nondegenerate orbit of solutions of $F(0,\cdot)=0$. Using this nondegeneracy together with the underlying variational structure of the Hamiltonian system we will solve $F(r,u)=0$ for $r>0$ small and $u$ close to $\cM$.

Let $T_v\cM,N_v\cM\subset H^1$ denote the tangent and normal space of $\cM$ at $v\in\cM$ respectively. We also write $w^\perp\subset L^2$ for the orthogonal complement of $w\in L^2$, as usual.

\begin{Lem}\label{lem:M-nondeg}
For any $v\in\cM$ there holds
$$
  \Ker D_u F(0,v) =T_v\cM=\R\dot{v},\quad \Range D_u F(0,v)=\nu(p(v))^\perp.
$$
\end{Lem}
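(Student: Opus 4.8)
The plan is to work in the moving orthonormal frame $\{\nu,i\nu\}$ along the closed curve $v$, exploiting that every $v\in\cM$ has constant distance $d(v)\equiv\de$ to $\Ga$; this collapses the formula for $D_uF(0,v)$ from Lemma~\ref{lem:FC1} into a decoupled pair of scalar equations. First I would set $d=\de$ in that formula and decompose an arbitrary $w\in H^1$ as $w=a\,\nu+b\,i\nu$ with the $L$-periodic real scalars $a=\ska{\nu,w}_{\R^2}$ and $b=\ska{i\nu,w}_{\R^2}$, writing $\nu=\nu(p(v(t)))$. Using Lemma~\ref{lem:deri} together with $\dot v=(1-\de\ka)i\nu$ one finds $\frac{d}{dt}\nu=\ka\,i\nu$ and hence $\frac{d}{dt}(i\nu)=-\ka\,\nu$. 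Inserting these into $\dot w$ and collecting the $\nu$- and $i\nu$-components, the first-order contributions cancel in the normal direction, and I expect to reach
\[
  D_uF(0,v)[w]=\dot a\,\nu+\Big(\dot b+\tfrac{\de\ka'}{1-\de\ka}\,b+\big(\ka-\tfrac1\de\big)a\Big)i\nu .
\]
Obtaining this clean expression is the technical heart of the argument.

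For the kernel, $D_uF(0,v)[w]=0$ forces $\dot a=0$, so $a\equiv a_0$ is constant, and leaves the linear equation $\dot b+\frac{\de\ka'}{1-\de\ka}\,b=(\frac1\de-\ka)a_0$. The decisive simplifications are that $\frac{\de\ka'}{1-\de\ka}=-\frac{d}{dt}\log(1-\de\ka)$, so that $\mu:=1/(1-\de\ka)$ is an integrating factor, and that $\mu\cdot(\frac1\de-\ka)=\frac1\de$ because $\frac1\de-\ka=\frac{1-\de\ka}{\de}$. Hence $\frac{d}{dt}(\mu b)=a_0/\de$. Since $\mu$ is positive and $L$-periodic, $\mu b$ is $L$-periodic, which forces $a_0=0$; then $\mu b$ is constant, i.e.\ $b=C(1-\de\ka)$ for some $C\in\R$. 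As $\dot v=(1-\de\ka)i\nu$ this means precisely $w\in\R\dot v$. Recalling that $T_v\cM=\R\dot v$ is spanned by the generator $\dot v$ of the $S^1$-action on $\cM$, we conclude $\Ker D_uF(0,v)=\R\dot v=T_v\cM$.

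For the range I would first observe that $D_uF(0,v)$ is a compact perturbation of $w\mapsto\dot w$: the remaining terms are of zeroth order in $w$ with bounded continuous coefficients and therefore factor through the compact embedding $H^1\hookrightarrow L^2$. Consequently $D_uF(0,v)$ is Fredholm of index $0$, and since its kernel is one-dimensional its range is a closed subspace of codimension one. It then suffices to show $\Range D_uF(0,v)\subseteq\nu(p(v))^\perp$: from the displayed formula, $\ska{D_uF(0,v)[w],\nu}_{L^2}=\int_0^L\dot a\,dt=a(L)-a(0)=0$ by $L$-periodicity of $a$, so every element of the range is $L^2$-orthogonal to $\nu(p(v))$. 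Since a codimension-one closed subspace contained in another codimension-one subspace must coincide with it, we obtain $\Range D_uF(0,v)=\nu(p(v))^\perp$.

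The step I expect to be the main obstacle is the frame computation leading to the displayed formula: one must differentiate the moving frame along $v$ correctly and track the cancellation in the normal component to pin down the exact coefficients. Once the scalar system is available, the periodicity obstruction $a_0=0$ for the kernel and the Fredholm-plus-orthogonality bookkeeping for the range are routine.
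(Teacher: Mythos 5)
Your proof is correct. The kernel computation is essentially the paper's argument in a rotated frame: the paper decomposes $w=s\dot v+q\nu$ and gets the decoupled system $\dot q=0$, $\de\dot s=q$ immediately, whereas your frame $\{\nu,i\nu\}$ produces a tangential equation needing the integrating factor $\mu=(1-\de\ka)^{-1}$; since $\dot v=(1-\de\ka)i\nu$ the two decompositions are equivalent ($a=q$, $b=(1-\de\ka)s$), and your displayed formula checks out against Lemma~\ref{lem:FC1} with $d\equiv\de$. Where you genuinely diverge is the range: the paper splits $iD_uF(0,v)=S+A$, computes the adjoint $(iD_uF(0,v))^*=S+iAi$, and solves the ODE system \eqref{eq:qs_system} to identify $\Ker(iD_uF(0,v))^*=\R\, i\nu(p(v))$, whence $\Range D_uF(0,v)\subset\nu(p(v))^\perp$; you instead read off that the $\nu$-component of $D_uF(0,v)[w]$ is the exact derivative $\dot a$ of an $L$-periodic function and integrate it to zero. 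This is shorter and avoids the adjoint entirely (indeed the paper's own identity \eqref{eq:kernel_of_DuF} would have permitted the same shortcut); the adjoint computation buys the extra information of an explicit complement $\R\, i\nu(p(v))$ of the range, which is in the spirit of what is used later via the projection $P_{\nu(p(v))}$, but is not needed for the lemma as stated. Your Fredholm bookkeeping (zeroth-order terms factor through the compact embedding $H^1\hookrightarrow L^2$, $\tfrac{d}{dt}:H^1\to L^2$ has index $0$, hence codimension of the range equals $\dim\Ker=1$, and a closed codimension-one subspace contained in $\nu(p(v))^\perp$ must equal it) matches the paper's argument via the isomorphism $I$ and closes the proof.
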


\begin{proof}
Clearly $T_v\cM=\R\dot{v}$ for $v\in\cM$. The $S^1$-equivariance of $F(0,\cdot)$ implies
$$
  D_uF(0,v)[\dot{v}] = \frac{d}{d\theta}_{|\theta=0}F(0,\theta*v)=0.
$$
It remains to prove $\Ker D_u F(0,v)\subset T_v\cM$. Suppose $w\in H^1$ satisfies $D_uF(0,v)[w]=0$. Since
$\dot{v}(t)\in \R i\nu(p(v(t)))$ for every $t\in\R$, we can decompose $w$ as
$$ w(t) = s(t)\dot{v}(t)+q(t)\nu(p(v(t))) $$
with $L$-periodic functions $s,q\in H^1_L(\R,\R)$. Applying Lemma~\ref{lem:FC1} and using $F(0,v)=0$, $d(v)\equiv \de$ we have
\begin{equation}\label{eq:kernel_of_DuF}
\begin{aligned}
  D_u F(0,v)[w]&=\dot{s}\dot{v}+sD_uF(0,v)[\dot{v}]+\dot{q}\nu(p(v))+qD_uF(0,v)[\nu(p(v))]\\
    &=\dot{s}\dot{v}+\dot{q}\nu(p(v))+q\frac{\ka(p(v))}{1-\de\ka(p(v))}\dot{v}-\frac{q}{\de}i\nu(p(v))\\
    &=\frac{\dot{v}}{\de}(\de\dot{s}-q)+\dot{q}\nu(p(v)).
\end{aligned}
\end{equation}
Thus $D_uF(0,v)[w] = 0$ implies $q(t)\equiv q\in\R$ and then $\dot{s}=\frac{q}{\de}\in\R$. Together with the periodicity of $s$, we see that $s$ must be a constant and thus $q=0$. Hence, $w\in \Ker D_uF(0,v)$ has the form $w=s\dot{v}$, which means $w\in T_v\cM$.

It remains to prove $\Range D_uF(0,v)=\nu(p(v))^\perp$. To do this we look at the operator $iD_uF(0,v)$, which splits as
$$
  iD_uF(0,v) = S+A
$$
with a symmetric operator
$$
  Sw = i\dot{w}+\frac{1}{\de}\nu\ska{\nu,w}_{\R^2}+\ka i\nu\ska{i\nu,w}_{\R^2}
$$
and a non-symmetric operator
$$
  Aw = -\frac{\ka'\de}{1-\de\ka}\nu\ska{i\nu,w}_{\R^2}.
$$
Observe that $\ska{Aw,\tilde{w}}_{L^2} = \ska{w,iAi\tilde{w}}_{L^2}$ for all $w,\tilde{w}\in H^1$. Hence for the adjoint of $iD_uF(0,v)$ there holds
$$
  (iD_uF(0,v))^* = S+iAi = iD_uF(0,v)+iAi-A.
$$
Now \eqref{eq:kernel_of_DuF} shows that $w = s\dot{v}+q\nu \in\Ker(iD_uF(0,v))^*$ if and only if
$$
\begin{aligned}
  0 = i\left(\frac{\dot{v}}{\de}(\de\dot{s}-q)+\dot{q}\nu\right)-i\frac{\ka'\de}{1-\de\ka}(q\nu+s\dot{v}),
\end{aligned}
$$
which is equivalent to the system
\begin{equation}\label{eq:qs_system}
  \begin{cases}
    \dot{q}=a(t)q,\\
    \dot{s}=a(t)s+\frac{q}{\de},
  \end{cases}
\end{equation}
where
$$
  a(t) = \frac{\ka'\de}{1-\de\ka} = \de D(\ka\circ p)(v)[i\nu] = \frac{\de}{1-\de\ka}D(\ka\circ p)(v)[\dot{v}]
       = -\frac{d}{dt}\log\big(1-\de\ka(p(v))\big).
$$
So the general solution of \eqref{eq:qs_system} is given by
$$
  q = c_q\frac{1}{1-\de\ka(p(v))},\quad\quad s = c_s\frac{1}{1-\de\ka(p(v))}+c_q\frac{1}{\de(1-\de\ka(p(v)))}t
$$
with constants $c_q,c_s\in\R$. Imposing $L$-periodicity on $s$ yields $c_q=0$ and therefore $w\in\Ker(iD_uF(0,v))^*$ if and only if
$$
  w=c_s\frac{1}{1-\de\ka(p(v))}\dot{v}=c_s i\nu(p(v)).
$$
It follows that $\Ker(iD_uF(0,v))^* = \R i\nu(p(v))$, hence $\Range D_uF(0,v) \subset \nu(p(v))^\perp$.

To conclude equality note that $F(0,u)-\dot{u} \in H^1$ for all $u\in B_{\de_1}(\cM)$, due to the regularity of $\ka,p$ and $\nu$. Thus if we define $P_0:L^2\to L^2$ to be the orthogonal projection onto the space of constant functions and the isomorphism $I:H^1\to L^2$, $u\mapsto \dot{u}+P_0u$, then $I^{-1}D_uF(0,v):H^1\to H^1$ is a compact perturbation of identity and hence an index $0$ Fredholm operator. Since $I$ is an isomorphism, the same holds for $D_uF(0,v)$. So
$$
  \codim \Range D_uF(0,v) = \dim\Ker D_uF(0,v)=1
$$
and therefore $\Range D_uF(0,v)=\nu(p(v))^\perp$.
\end{proof}

Since we are working on the tubular neighborhood $B_{\de_1}(\cM)$, it is enough to fix $v\in\cM$, e.g. $v=u^0$, and to solve $F(r,v+\cdot)=0$ on $B_{\de_1}(0)\cap N_v\cM$. For the fixed $v$ let $P_{\nu(p(v))}:L^2\to \R\nu(p(v))$ be the $L^2$-orthogonal projection onto $\R\nu(p(v))$.

\begin{Prop}\label{prop:IFT}
  There exist positive constants $r_1<r_0$, $\de_2<\de_1$ and a continuously differentiable map $[0,r_1)\to B_{\de_2}(0)\cap N_v\cM$, $r\mapsto w^{(r)}$, such that on $[0,r_1)\times B_{\de_2}(0)\cap N_v\cM$ holds
  $$
    (\id-P_{\nu(p(v))})F(r,v+w)=0\quad\Longleftrightarrow\quad w=w^{(r)}.
  $$
  In particular, $w^{(r)}=o(1)$ as $r\to0$.
\end{Prop}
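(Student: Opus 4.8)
The plan is to obtain $w^{(r)}$ by a direct application of the implicit function theorem to the projected equation, with the nondegeneracy from Lemma~\ref{lem:M-nondeg} supplying the crucial invertibility at $r=0$. Concretely, I would set $Y:=\nu(p(v))^\perp\subset L^2$ and define
\[
  G:[0,r_0)\times\big(B_{\de_1}(0)\cap N_v\cM\big)\to Y,\qquad G(r,w):=(\id-P_{\nu(p(v))})F(r,v+w).
\]
Since $P_{\nu(p(v))}$ is the orthogonal projection onto $\R\nu(p(v))$, the operator $\id-P_{\nu(p(v))}$ is exactly the orthogonal projection onto $Y$, so $G$ indeed takes values in $Y$; moreover $G$ is of class $\cC^1$ by Lemma~\ref{lem:FC1}. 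Because $v\in\cM$ solves \eqref{eq:HS0} we have $F(0,v)=0$, hence $G(0,0)=0$, which is the base point for the implicit function theorem.

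The heart of the argument is to verify that the partial derivative
\[
  D_wG(0,0)=(\id-P_{\nu(p(v))})\,D_uF(0,v)\big|_{N_v\cM}:N_v\cM\to Y
\]
is a Banach space isomorphism. For injectivity I would argue as follows: if $w\in N_v\cM$ lies in the kernel, then since $\Range D_uF(0,v)=\nu(p(v))^\perp=Y$ by Lemma~\ref{lem:M-nondeg} and $\id-P_{\nu(p(v))}$ acts as the identity on $Y$, one has $D_uF(0,v)[w]=0$, i.e. $w\in\Ker D_uF(0,v)=T_v\cM$; as $T_v\cM\cap N_v\cM=\{0\}$ this forces $w=0$. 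For surjectivity, given $y\in Y$ pick $w\in H^1$ with $D_uF(0,v)[w]=y$ and decompose $w=w_T+w_N$ along $H^1=T_v\cM\oplus N_v\cM$; using $D_uF(0,v)[w_T]=0$ gives $D_uF(0,v)[w_N]=y$, and the projection being the identity on $Y$ yields $D_wG(0,0)[w_N]=y$. Thus $D_wG(0,0)$ is a continuous linear bijection between the Hilbert spaces $N_v\cM$ and $Y$, hence an isomorphism by the bounded inverse theorem.

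With $G(0,0)=0$ and $D_wG(0,0)$ an isomorphism, the implicit function theorem then yields $r_1\in(0,r_0)$, $\de_2\in(0,\de_1)$ and a unique $\cC^1$ map $r\mapsto w^{(r)}\in B_{\de_2}(0)\cap N_v\cM$ with $w^{(0)}=0$ solving $G(r,w^{(r)})=0$, and characterising the entire zero set of $(\id-P_{\nu(p(v))})F(r,v+\cdot)$ in $B_{\de_2}(0)\cap N_v\cM$. Finally $w^{(r)}=o(1)$ as $r\to0$ follows from continuity of this map together with $w^{(0)}=0$.

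The one genuine subtlety I would be careful about is that $r$ ranges over the half-open interval $[0,r_0)$, so the base point $r=0$ sits on the boundary of the parameter domain; this is harmless because Lemma~\ref{lem:FC1} provides one-sided $\cC^1$ dependence on $r$ up to and including $r=0$, and the contraction-mapping proof of the implicit function theorem only solves for $w$ while treating $r$ as a parameter, so it applies verbatim on a one-sided neighborhood. Beyond this, the statement is essentially a formal consequence of the nondegeneracy established in Lemma~\ref{lem:M-nondeg} and the regularity of $F$; the real difficulty lies upstream in those results rather than in the present step. Note also that the remaining one-dimensional equation $P_{\nu(p(v))}F(r,v+w^{(r)})=0$ is deliberately \emph{not} addressed here and must be solved separately, using the variational structure of the Hamiltonian system.
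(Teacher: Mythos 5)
Your proposal is correct and follows exactly the paper's route: the paper's own proof is the single line ``This follows directly from Lemma~\ref{lem:M-nondeg} and the implicit function theorem,'' and you have merely spelled out the details (the isomorphism $D_wG(0,0):N_v\cM\to\nu(p(v))^\perp$ via the kernel/range identities, and the one-sided $\cC^1$ regularity at $r=0$ from Lemma~\ref{lem:FC1}). Nothing to object to.
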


\begin{proof}
This follows directly from lemma~\ref{lem:M-nondeg} and the implicit function theorem.
\end{proof}

It remains to solve the equation $F(r,u)=0$ on a one-dimensional subspace that is transversal to $\nu(p(v))^\perp$. In order to find such a space we use that $F$ is almost the derivative of an $S^1$-invariant functional. Note that we lost the variational structure of the original equation \eqref{eq:HS} due to the scaling with $\chi_r$, but the next lemma shows that there is still a natural direction in which $F$ vanishes.

\begin{Lem}\label{lem:natural_direction}
  For $(r,u)\in[0,r_0)\times B_{\de_1}(\cM)$ define the scalar, $L$-periodic and continuous function
  $$
    \la(r,u)=\frac{\de-rd(u)\ka(p(u))}{\de^2(1-d(u)\ka(p(u)))}.
  $$
  Then
  $$
    \ska{F(r,u),\la(r,u)i\dot{u}}_{L^2}=0.
  $$
\end{Lem}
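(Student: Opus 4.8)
The plan is to recognize that the otherwise mysterious weight $\la(r,u)$ is, up to the factor $r$, exactly the Jacobian determinant of the scaling map. Indeed, by Lemma~\ref{lem:deri} the map $D\chi_r(u)$ is diagonal in the orthonormal frame $\{\nu,i\nu\}$ with eigenvalues $\frac{r}{\de}$ and $\frac{\de-rd\ka}{\de(1-d\ka)}$, so that $\det D\chi_r(u)=\frac{r(\de-rd\ka)}{\de^2(1-d\ka)}=r\,\la(r,u)$. This single observation turns the asserted orthogonality into a conservation-of-energy identity for the original, \emph{unscaled} Hamiltonian $\ham$, integrated over one full choreographic period.

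I would first treat $r>0$. Writing $F(r,u)=\dot u-\tilde X$ with $\tilde X:=-2\pi r\,D\chi_r^{-1}(\chi_r(u))\big[i\nabla_{z_1}\ham(Z)\big]$ and $Z(t):=\big(\chi_r(u),\chi_r(\theta_2*u),\dots,\chi_r(\theta_N*u)\big)$, the leading term contributes nothing since $\ska{\dot u,i\dot u}_{\R^2}=0$ pointwise; hence it suffices to show $\int_0^L\la\,\om_0(\dot u,\tilde X)\,dt=0$, where $\om_0(a,b):=\ska{ia,b}_{\R^2}$ and $\ska{\tilde X,i\dot u}_{\R^2}=\om_0(\dot u,\tilde X)$. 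Applying the two–dimensional scaling rule $\om_0(w,A^{-1}v)=\frac{1}{\det A}\om_0(Aw,v)$ with $A=D\chi_r(u)$ and $\det A=r\la$, together with $\om_0(D\chi_r[\dot u],ig)=\ska{D\chi_r[\dot u],g}_{\R^2}$ and $D\chi_r(u)[\dot u]=\frac{d}{dt}\chi_r(u)$, collapses the weighted integrand to
\[
\la\,\om_0(\dot u,\tilde X)=-2\pi\,\phi(t),\qquad
\phi(t):=\Big\langle \tfrac{d}{dt}\chi_r(u(t)),\,\nabla_{z_1}\ham(Z(t))\Big\rangle_{\R^2}.
\]
The content of this step is that $\la$ precisely cancels the Jacobian produced by pulling the area form back through $\chi_r$, leaving the unscaled energy gradient $\nabla_{z_1}\ham$ paired against the velocity $\frac{d}{dt}\chi_r(u)$.

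Finally I would recover $\phi$ as one summand of a total derivative. By the choreography symmetry \eqref{eq:symmetry_of_nabla_H} and $L$-periodicity of $u$ one has $\nabla_{z_k}\ham(Z(t))=\nabla_{z_1}\ham(Z(t+\theta_k))$, whence
\[
\frac{d}{dt}\ham(Z(t))=\sum_{k=1}^N\Big\langle\nabla_{z_k}\ham(Z(t)),\tfrac{d}{dt}Z_k(t)\Big\rangle_{\R^2}=\sum_{k=1}^N\phi(t+\theta_k).
\]
Since $t\mapsto\ham(Z(t))$ is $L$-periodic, integrating over $[0,L]$ gives $0=\sum_{k}\int_0^L\phi(t+\theta_k)\,dt=N\int_0^L\phi\,dt$ by periodicity of $\phi$, so $\int_0^L\phi\,dt=0$ and therefore $\ska{F(r,u),\la(r,u)i\dot u}_{L^2}=-\int_0^L\la\,\om_0(\dot u,\tilde X)\,dt=2\pi\int_0^L\phi\,dt=0$. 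The boundary case $r=0$ then follows by continuity in $r$ (Lemma~\ref{lem:FC1} and the explicit continuity of $\la$), or alternatively by the direct computation showing that the $r=0$ integrand equals $\frac{d}{dt}\log d(u(t))$.

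The main obstacle is the conceptual first step: realizing that $\la$ is forced to be the inverse–Jacobian weight $\det D\chi_r=r\la$, which is exactly what makes the area–form identity eliminate the scaling and expose the energy as a total time derivative. Once that is seen, the choreography symmetry and periodicity make the integral vanish automatically and no estimates are needed.
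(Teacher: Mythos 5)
Your proposal is correct and follows essentially the same route as the paper: the key identity $\det D\chi_r(u)=r\,\la(r,u)$ combined with the two-dimensional relation $A^{-1}i(A^T)^{-1}=\det(A)^{-1}i$, and then the choreography symmetry \eqref{eq:symmetry_of_nabla_H} together with $L$-periodicity to make the energy term integrate to zero. The only cosmetic difference is that you phrase the last step as the vanishing of $\int_0^L\frac{d}{dt}\ham(Z(t))\,dt$ for the unscaled Hamiltonian, whereas the paper works with $H_r=\ham\circ\chi_r^{\times N}$ and the constancy of $\theta\mapsto\int H_r(\theta*U)\,dt$; these are the same computation in different coordinates.
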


\begin{proof}
With $H_r(u_1,\ldots,u_N):=\ham(\chi_r(u_1),\ldots,\chi_r(u_N))$ and $U:=(u,\theta_2*u,\ldots,\theta_N*u)$ as in Section~\ref{sec:scal} one can write $F$ as
$$
\begin{aligned}
  F(r,u) &= \dot{u}+rD\chi_r(u)^{-1}i\left(D\chi_r(u)^T\right)^{-1}\nabla_1 H_r(U)\\
         &= \dot{u}+r\det D\chi_r(u)^{-1}i\nabla_1 H_r(U)\\
         &= \dot{u}+\frac{1}{\la(r,u)}i\nabla_1 H_r(U).
\end{aligned}
$$
By \eqref{eq:symmetry_of_nabla_H} one has
$$
\begin{aligned}
  \ska{\nabla_kH_r(U),\theta_k*\dot{u}}_{L^2}
    &=\ska{\nabla_1H_r(\si^{-(k-1)}*U),\theta_k*\dot{u}}_{L^2} = \ska{\nabla_1H_r(\theta_k*U),\theta_k*\dot{u}}_{L^2}\\
    &=\ska{\nabla_1H_r(U),\dot{u}}_{L^2}
\end{aligned}
$$
Combining this yields
$$
  \ska{F(r,u),\la(r,u)i\dot{u}}_{L^2} = \ska{\nabla_1H_r(U),\dot{u}}_{L^2} = \frac{1}{N}\int_0^T DH_r(U)[\dot{U}]\:dt.
$$
In order to see that the last term vanishes look at
$$
  \phi_u:S^1\to\R,\quad\phi_u(\theta) = \int_0^TH_r(\theta*U)\:dt,
$$
which is in fact constant. If $u$ is $C^1$ one concludes
$$
  0=\phi_u'(0)=\int_0^TDH_r(U)[\dot{U}]\:dt,
$$
and by density the equation remains valid for every $u\in B_{\de_1}(\cM)$.
\end{proof}

Let $v\in\cM$ still be fixed as in Proposition \ref{prop:IFT}.

\begin{Lem}\label{lem:projections}
  There exists $r_2\in(0,r_0)$ and $\de_3\in(0,\de_1)$ such that
  $$
    L^2=\la(r,u)i\dot{u}\oplus\nu(p(v))^\perp
  $$
  for any $(r,u)\in[0,r_2)\times B_{\de_3}(v)$.
\end{Lem}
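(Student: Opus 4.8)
The plan is to reduce the asserted direct-sum decomposition to the nonvanishing of a single scalar pairing, verify that pairing by a one-line computation at the base point $(0,v)$, and then propagate it by continuity.

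First I would record the abstract linear-algebra fact. Here $\nu(p(v))^\perp$ is the orthogonal complement in $L^2$ of the single fixed function $t\mapsto\nu(p(v(t)))$, hence a closed hyperplane, while $\R\,\la(r,u)i\dot{u}$ is a line. For such a pair the splitting
$$
  L^2=\R\,\la(r,u)i\dot{u}\oplus\nu(p(v))^\perp
$$
holds (and is automatically a topological direct sum, both summands being closed) if and only if the generator of the line is not contained in the hyperplane, i.e.\ if and only if
$$
  \ska{\la(r,u)i\dot{u},\nu(p(v))}_{L^2}\neq 0.
$$
Indeed, assuming this, any $g\in L^2$ decomposes as $g=\al\,\la(r,u)i\dot{u}+\big(g-\al\,\la(r,u)i\dot{u}\big)$ with $\al=\ska{g,\nu(p(v))}_{L^2}\big/\ska{\la(r,u)i\dot{u},\nu(p(v))}_{L^2}$, the second summand lying in $\nu(p(v))^\perp$; uniqueness follows by pairing against $\nu(p(v))$. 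In particular the nonvanishing also guarantees $\la(r,u)i\dot u\neq0$, so the line is genuinely one-dimensional.

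Next I would evaluate the pairing at $(r,u)=(0,v)$. Since $v$ solves \eqref{eq:HS0} with $d(v)\equiv\de$, one has $\dot{v}=(1-\de\ka)i\nu(p(v))$, hence $i\dot{v}=-(1-\de\ka)\nu(p(v))$, while $\la(0,v)=\frac{1}{\de(1-\de\ka)}$ directly from the definition in Lemma~\ref{lem:natural_direction} (here $\ka=\ka(p(v(t)))$). The factors $1-\de\ka$ cancel, so $\la(0,v)i\dot{v}\equiv-\frac1\de\nu(p(v))$ pointwise, whence
$$
  \ska{\la(0,v)i\dot{v},\nu(p(v))}_{L^2}=-\frac1\de\int_0^L\abs{\nu(p(v(t)))}^2\,dt=-\frac{L}{\de}\neq0,
$$
using $\abs{\nu}\equiv1$. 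Finally I would invoke continuity: the map
$$
  (r,u)\mapsto\ska{\la(r,u)i\dot{u},\nu(p(v))}_{L^2}=\int_0^L\la(r,u)(t)\,\ska{i\dot{u}(t),\nu(p(v(t)))}_{\R^2}\,dt
$$
is continuous on $[0,r_0)\times B_{\de_1}(\cM)$, because $u\mapsto\dot{u}$ is bounded linear $H^1\to L^2$, the scalar factor $\la(r,u)(t)$ depends continuously on $(r,u)$ through $H^1\hookrightarrow\cC^0$ and the continuity of $d,p,\nu,\ka$, and $\nu(p(v))$ is a fixed $L^2$ function. Combined with the base-point value $-L/\de$, this produces $r_2\in(0,r_0)$ and $\de_3\in(0,\de_1)$ on which the pairing stays, say, $\le-\frac{L}{2\de}<0$, giving the splitting.

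The only genuinely delicate point is the last continuity and uniform-nonvanishing step: one must verify that on the tubular neighborhood the geometric data $d,p,\nu,\ka$ behave well and, crucially, that the denominator $1-d(u)\ka(p(u))$ stays bounded away from zero — which follows from $\abs{\ka}<1/\de$ on $\Ga$ together with $d(u)$ being close to $\de$ — so that $\la$ varies continuously in the $H^1$-topology. Once this is in hand the computation is elementary, and the nonvanishing is robust thanks to the clean cancellation that makes the integrand the constant $-1/\de$ at the base point.
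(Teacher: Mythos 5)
Your proposal is correct and takes essentially the same route as the paper: reduce the splitting to the nonvanishing of the scalar pairing $T(r,u)=\ska{\la(r,u)i\dot{u},\nu(p(v))}_{L^2}$, compute $T(0,v)=-L/\de$ via the cancellation $\la(0,v)i\dot v=-\frac1\de\nu(p(v))$, and conclude by continuity. You merely spell out the elementary direct-sum criterion and the continuity of $T$ in more detail than the paper does.
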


\begin{proof}
The statement is true as long as the map $T:[0,r_0)\times B_{\de_1}(\cM)\to\R$ defined by
$$
  T(r,u) = \ska{\la(r,u)i\dot{u},\nu(p(v))}_{L^2}
$$
is different from $0$. This is the case for $(r,u)$ close to $(0,v)$, because $T$ is continuous and
$$
  T(0,v) = \ska{\frac{1}{\de(1-\de\ka(p(v)))}i\dot{v},\nu(p(v))}_{L^2}
         = -\frac{1}{\de}\norm{\nu(p(v))}_{L^2}^2=-\frac{L}{\de}<0.
$$
\end{proof}

Now we define $\bar{r} := \min\{r_1,r_2\}$. Combining \ref{prop:IFT}, \ref{lem:natural_direction}, \ref{lem:projections} and taking $u^{(r)} = v+w^{(r)}$ we find for every $r\in[0,\bar{r})$ exactly one $S^1$-orbit $S_r = S^1*u^{(r)}$ contained in $B_{\min\set{\de_2,\de_3}}(\cM)$ of solutions of $F(r,\cdot)=0$. Moreover the continuum of solutions is parametrized over $[0,\bar{r})$ in a $\cC^1$ way, i.e.\ $r\mapsto u^{(r)}$ is $\cC^1$. Additionally one has $S_0 = \cM = S^1*u^0$. Finally, scaling back we obtain for $r\in(0,\bar{r})$ a $2\pi rL$-periodic solution of the original system \eqref{eq:HS} by setting
\[
  z_1^{(r)}(t) = \chi_r\big(u^{(r)}(t/(2\pi r))\big)\quad \textrm{and}\quad
  z_k^{(r)}(t) = z_1^{(r)}\left(t+\frac{2\pi rL(k-1)}{N}\right),\quad k=2,\cdots,N.
\]
Then all the properties in Theorem~\ref{thm:main} follow by our construction and Lemma~\ref{lem:r_derivative_of_u} below, where the parametrization of $\Ga$ by arc-length in \ref{thm:main}(2),(3) is given by $\ga = p\circ u^{(0)}$ and the rescaled function by $v^{(r)} = z_1(2\pi r\cdot) = \chi_r\big(u^{(r)}\big)$. Observe that $v^{(r)}$ satisfies
$$
  \dot{v}^{(r)} = -2\pi ri\nabla_{z_1}\ham\big(v^{(r)},\ldots,\theta_N*v^{(r)}\big),
$$
such that $\dot{v}^{(r)}=(1-r\ka)i\nu+o(r)$ not only holds in $L^2$, but also uniformly in $t\in\R$.

\begin{Lem}\label{lem:r_derivative_of_u}
  The $r$-derivative of $u^{(r)}$ at $r=0$ is given by $\displaystyle \pa_ru^{(0)} = -\frac{\de}{2}\ka\nu$.
\end{Lem}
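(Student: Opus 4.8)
The plan is to exploit that, by the construction culminating in this section, $u^{(r)}=v+w^{(r)}$ solves the \emph{full} equation $F(r,u^{(r)})=0$ for every $r\in[0,\bar r)$, that $r\mapsto u^{(r)}$ is of class $\cC^1$, and that $w^{(r)}\in N_v\cM$. Since Lemma~\ref{lem:FC1} guarantees that $F$ is $\cC^1$ up to and including $r=0$, I would differentiate the identity $F(r,u^{(r)})=0$ in $r$ at $r=0$ to arrive at the linear equation
\[
D_uF(0,v)\big[\pa_ru^{(0)}\big]=-\pa_rF(0,v)=-\tfrac12\de\ka'\nu+\tfrac12(1-\de\ka)\ka\,i\nu,
\]
where the right-hand side is the formula for $\pa_rF(0,v)$ from Lemma~\ref{lem:FC1} evaluated at $d(v)\equiv\de$. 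As $N_v\cM$ is a fixed closed subspace containing every $w^{(r)}$, the derivative $\pa_ru^{(0)}$ again lies in $N_v\cM$; this normalization will eventually select the correct solution.

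Next I would verify by direct substitution that $w_*:=-\tfrac\de2\ka\nu$ solves this linear equation. The only ingredients are the derivatives along the orbit of the scalar $\ka\circ p\circ v$ and of the field $\nu\circ p\circ v$: using $\dot v=(1-\de\ka)i\nu$ (which is $F(0,v)=0$ at $d\equiv\de$) together with $\nabla\ka$, $Dp$ and $D\nu$ from Lemma~\ref{lem:deri}, one finds $\tfrac{d}{dt}(\ka\circ p\circ v)=\ka'$ and $\tfrac{d}{dt}(\nu\circ p\circ v)=\ka\,i\nu$, so that $\dot w_*=-\tfrac\de2\ka'\nu-\tfrac\de2\ka^2i\nu$. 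Because $\ska{i\nu,w_*}_{\R^2}=0$ and $\ska{\nu,w_*}_{\R^2}=-\tfrac\de2\ka$, the formula for $D_uF(0,v)$ in Lemma~\ref{lem:FC1} collapses to $\dot w_*+\tfrac12\ka\,i\nu=-\tfrac\de2\ka'\nu+\tfrac12(1-\de\ka)\ka\,i\nu$, which is exactly the right-hand side found above. (As a consistency check, $-\pa_rF(0,v)$ does lie in $\Range D_uF(0,v)=\nu(p(v))^\perp$, since $\int_0^L\ka'\,dt=\int_0^L\tfrac{d}{dt}(\ka\circ p\circ v)\,dt=0$ by periodicity.)

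It remains to pin down the solution uniquely, and this is where I expect the real work to be. By Lemma~\ref{lem:M-nondeg} one has $\Ker D_uF(0,v)=T_v\cM=\R\dot v$, so the linear equation determines $\pa_ru^{(0)}$ only modulo $\R\dot v$; the constraint $\pa_ru^{(0)}\in N_v\cM$ must close this gap. I therefore have to check that the candidate $w_*=-\tfrac\de2\ka\nu$ itself belongs to $N_v\cM$, i.e.\ $\ska{w_*,\dot v}=\ska{w_*,\dot v}_{L^2}+\ska{\dot w_*,\ddot v}_{L^2}=0$. The $L^2$-part vanishes pointwise because $w_*$ is parallel to $\nu$, which is orthogonal to $\dot v\in\R\, i\nu$; for the $H^1$-correction one computes $\ska{\dot w_*,\ddot v}_{\R^2}=\tfrac\de2\ka\ka'=\tfrac\de4\tfrac{d}{dt}(\ka^2)$, whose integral over the period is again zero. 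Hence $w_*\in N_v\cM$, and since $N_v\cM\cap\R\dot v=\{0\}$ the linear equation has a unique solution in $N_v\cM$; therefore $\pa_ru^{(0)}=w_*=-\tfrac\de2\ka\nu$, as claimed.
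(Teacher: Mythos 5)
Your proof is correct and follows essentially the same route as the paper: differentiate $F(r,u^{(r)})=0$ at $r=0$, use the formulas for $D_uF(0,v)$ and $\pa_rF(0,v)$ from Lemma~\ref{lem:FC1}, and use $\pa_ru^{(0)}\in N_v\cM$ to remove the tangential ambiguity. The only (cosmetic) difference is that the paper writes the unknown as $s\dot v+q\nu$ and solves the resulting ODE system for $(s,q)$ via periodicity, whereas you verify the candidate $-\tfrac{\de}{2}\ka\nu$ directly and invoke $\Ker D_uF(0,v)=\R\dot v$ from Lemma~\ref{lem:M-nondeg} for uniqueness; both resolutions are sound.
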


\begin{proof}
Let $v:=u^{(0)}$. As in equation \eqref{eq:kernel_of_DuF} we write $\pa_rv = s\dot{v}+q\nu$ with periodic functions $s,q\in H^1_L(\R,\R)$. Differentiating of $F(r,u^{(r)})=0$ and using Lemma~\ref{lem:FC1} and \eqref{eq:kernel_of_DuF} then gives
$$
  \dot{q}\nu+\frac{\dot{v}}{\de}(\de\dot{s}-q) = D_uF(0,v)\pa_ru^{(0)}
    = -\pa_rF(r,v)=-\frac{\de}{2}\ka'\nu+\frac{1}{2}\ka \dot{v}.
$$
So with $q(0)=q_0\in\R$ we have $q=q_0-\frac{\de}{2}\ka$ and $\dot{s}=\frac{1}{2}\ka+\frac{q}{\de}=\frac{q_0}{\de}$. The periodicity of $s$ implies $q_0=0$ and $s\equiv s_0\in\R$. Finally $s_0=0$ is a consequence of $\pa_ru^{(0)}\in N_v\cM$. Indeed we have:
$$
\begin{aligned}
  0 = \ska{s_0\dot{v}-\frac{\de}{2}\ka\nu,\dot{v}}
    = s_0\norm{\dot{v}}^2-\frac{\de}{2}\int_0^L\ska{\frac{d}{dt}(\ka\nu),\ddot{v}}_{\R^2}\:dt
    = s_0\norm{\dot{v}}^2+\frac{\de}{2}\int_0^L\ka\dot{\ka}\:dt = s_0\norm{\dot{v}}^2.
\end{aligned}
$$
\end{proof}

\section{Proof of Proposition~\ref{prop:simply_connected}}\label{sec:simply_connected_domains}

Consider a bounded and simply connected domain $\Om\subset\C$ with $\cC^{3,\alpha}$ boundary. Let
$$
  G(x,y) = -\frac{1}{2\pi}\log\abs{x-y}-g(x,y)
$$
be the Green function for the Dirichlet Laplace operator, and $\rho:\Om\to\R$ the corresponding conformal radius, which is  defined by the relation
\[
  -\frac{1}{2\pi}\log(\rho(z)) = g(z,z).
\]
We proof that the pair $\rho,G$ satisfies Assumption \ref{ass}.

\begin{proof}[Proof of Proposition \ref{prop:simply_connected}] The part concerning $\rho$ has been proved by Bandle and Flucher, \cite{flucher:1999,bandle-flucher:1996}. They showed that the conformal radius in terms of a Riemann map $f:\Om\to B_1(0)$ is given by
\begin{equation}\label{eq:conformal_radius_riemann_map}
  \rho = \frac{1-\abs{f}^2}{\abs{f'}}.
\end{equation}
Since $\pa\Om\in\cC^{3,\alpha}$, $f$ can be extended to $f\in\cC^{3,\alpha}(\overline{\Om})$ (cf. proof of theorem 3.1 in \cite{bell:1990}). In order to see  $\rho\in\cC^3(\overline{\Om})$, derive \eqref{eq:conformal_radius_riemann_map} three times in the interior of $\Om$ and observe that the fourth derivative of $f$ only appears in one term containing the product $\rho f^{(4)}$. But for $z\to\pa\Om$ there holds
$$
\begin{aligned}
  \abs{\rho(z)f^{(4)}(z)} = \frac{\rho(z)}{d(z)}\cdot\abs{\frac{d(z)}{2\pi i}\int_{\abs{w-z}
     = \frac{d(z)}{2}}\frac{f^{(3)}(w)-f^{(3)}(z)}{(w-z)^2}\:dw} = O(d(z)^\alpha),
\end{aligned}
$$
because $\rho=2d+o(d)$ (see \cite{flucher:1999,bandle-flucher:1996}). Therefore $\rho\in\cC^3(\overline{\Om})$. Moreover Bandle and Flucher have shown that
\begin{equation*}\label{eq:flucherexpansion}
  \rho(p-d\nu(p)) = 2d-\ka(p)d^2+o(d^2)
\end{equation*}
for every boundary point $p\in\pa \Om$. Thus one obtains on the boundary $\pa\Om$:  $\rho=0$, $\ska{\nabla \rho,\nu}=-2$ and $\ska{\rho''\nu,\nu}=-2\ka$. Clearly $\ska{\nabla\rho,i\nu}=0$, hence $\rho''i\nu = D(-2\nu)[i\nu] = -2\ka i\nu$ shows $\rho'' = -2\ka\cdot\id$ on $\pa\Om$.

We first check the properties of $G$ when $\Om$ is the unit disc. In that case one has
\begin{equation}\label{eq:Greens_function_ball}
  G_{B_1(0)}(x,y) = -\frac{1}{2\pi}\left(\log\abs{x-y}-\log\abs{x-R(y)}-\log\abs{y}\right),
\end{equation}
where $R(y):=\frac{y}{\abs{y}^2}$. For every $\eps>0$ there exist constants $c,\tilde{\eps}>0$, such that
$$
  \abs{\nabla_1G_{B_1(0)}(x,y)}
    = \abs{-\frac{1}{2\pi}\left(\frac{x-y}{\abs{x-y}^2}-\frac{x-R(y)}{\abs{x-R(y)}^2}\right)}
    \leq cd(y)
$$
holds for $(x,y)\in B_1(0)^2$ with $\abs{x-y}\geq \eps$ and $d(y)=1-\abs{y}<\tilde{\eps}$. The same is valid for
$$
\begin{aligned}
  \nabla_1^2G_{B_1(0)}(x,y)
    &= -\frac{1}{2\pi}\left(\frac{1}{\abs{x-y}^2}-\frac{1}{\abs{x-R(y)}^2}\right)\cdot\id\\
    &\hspace{1cm}
      +\frac{1}{2\pi}\left(2\frac{(x-y)(x-y)^T}{\abs{x-y}^4}-2\frac{(x-R(y))(x-R(y))^T}{\abs{x-R(y)}^4}\right).
\end{aligned}
$$
For $\nabla_2\nabla_1G_{B_1(0)}$ one observes for $(x,y)$ as above that
$$\begin{aligned}
  \nabla_2\nabla_1G_{B_1(0)}(x,y)
    &= -\nabla_1^2G_{B_1(0)}(x,y)+\nabla_2\nabla_1G_{\R^2}(x,R(y))\cdot(\id-DR(y))\\
    &= O(d(y))+O(1)\cdot(\id-DR(y)),
\end{aligned}
$$
where $G_{\R^2}(x,y)=-\frac{1}{2\pi}\log\abs{x-y}$. Now
$$
  \id-DR(y) = \left(1-\frac{1}{\abs{y}^2}\right)\cdot\id-2\frac{yy^T}{\abs{y}^4} = O(d(y))+O(1)\nu(p(y))^T
$$
implies the asymptotic behavior for $\nabla_2\nabla_1 G_{B_1(0)}$.

For the general case we use a Riemann map $f:\Om\to B_1(0)$, which again extends to the boundary $\cC^{4,\alpha}$ smooth, so that
$$
  G_\Om(x,y)=G_{B_1(0)}(f(x),f(y)).
$$
The result for $\nabla_1G_\Om$, $\nabla_1^2 G_\Om$ follows then from the properties of $G_{B_1(0)}$ and the fact that there exists a constant $c>0$ with $d(f(y))\leq cd(y)$. In the same manner one has
$$
\begin{aligned}
  \nabla_2\nabla_1G_\Om(x,y)
    &= O(d(y))+O(1)\nu(p(f(y)))^TDf(y)\\
    &= O(d(y))+O(1)\nu(f(p(y)))^TDf(p(y))),
\end{aligned}
$$
from which we can conclude the proposition because $f$ is a biholomorphic map.
\end{proof}


{\sc Thomas Bartsch\\
 Mathematisches Institut, Universit\"at Giessen\\
 35392 Giessen, Germany}\\
 Thomas.Bartsch@math.uni-giessen.de

\vspace{2mm}
{\sc Qianhui Dai\\
 College of Science, China University of Petroleum-Beijing\\
 102249 Beijing, China}\\
 qhdai@cup.edu.cn

\vspace{2mm}
{\sc Bj\"orn Gebhard\\
 Mathematisches Institut, Universit\"at Giessen\\
 35392 Giessen, Germany}\\
 Bjoern.Gebhard@math.uni-giessen.de\\[1em]


\begin{thebibliography}{99}
\baselineskip 2 mm


\bibitem{aref-etal:2002} H. Aref, P. K. Newton, M. A. Strember,
T. Tokieda, D. Vainchtein:
{\em Vortex crystals.} Adv. Appl. Mech. {\bf 39} (2002), 1--79.

\bibitem{bandle-flucher:1996} C. Bandle, M. Flucher:
{\em Harmonic Radius and Concentration of Energy; Hyperbolic Radius and Liouville's Equations $\De u=e^U$ and $\De U = U^{\frac{n + 2}{n - 2}}$.}
SIAM Review {\bf 38} (1996), 191--238.

\bibitem{Bartsch:1997} T.~Bartsch:
{\em A generalization of the Weinstein-Moser theorems on periodic orbits of a Hamiltonian system near an equilibrium.} Inst. H. Poincar\'e Anal. Non Lin\'eaire {\bf 14 (6)} (1997), 691--718.

\bibitem{bartsch:2016} T. Bartsch:
{\em Periodic solutions of singular first-order Hamiltonian systems of N-vortex type.} arXiv:1605.07864.

\bibitem{bartsch-dai:2016} T. Bartsch, Q. Dai:
{\em Periodic solutions of the $N$-vortex Hamiltonian in planar domains.}J. Diff. Eq. {\bf 260 (3)} (2016), 2275-2295.

\bibitem{bartsch-gebhard:2016} T. Bartsch, B. Gebhard:
{\em Global continua of periodic solutions of singular first-order Hamiltonian systems of N-vortex type.} arXiv:1604.01576.

\bibitem{bartsch-pistoia:2014} T. Bartsch, A. Pistoia:
{\em Critical points of the $N$-vortex Hamiltonian in bounded planar domains.}
SIAM J. Appl. Math. {\bf 75 (2)} (2015), 726-744.

\bibitem{bartsch-pistoia-weth:2010} T. Bartsch, A. Pistoia, T. Weth:
{\em N-vortex equilibria for ideal fluids in bounded planar domains and new nodal solutions of the sinh-Poisson
and the Lane-Emden-Fowler equations}.
Comm. Math. Phys. {\bf 297} (2010), 653--687.

\bibitem{bartsch-saccet:2016} T. Bartsch, M. Saccet:
{\em Periodic solutions with prescribed minimal period of the 2-vortex problem in domains.} arXiv:1608.06775.

\bibitem{bell:1990} S. Bell:
{\em Mapping problems in complex analysis and the {$\overline\pa$}-problem}
Bull. Amer. Math. Soc. (N.S.) {\bf 22 (2)} (1990), 233--259.

\bibitem{caffarelli-friedman:1985} L. A. Caffarelli, A. Friedman:
{\em Convexity of solutions of semilinear elliptic equations.}
Duke Math. J. {\bf 52} (1985), 431--456.

\bibitem{Colliander-Jerrard:1998}
J.E. Colliander, R.L. Jerrard:
\newblock {\em Vortex dynamics for the Ginzburg-Landau-Schr\"odinger equation}
\newblock Int. Math. Res. Notices {\bf 1998} (1998), 333--358.

\bibitem{crowdy-marshall:2005} D. G. Crowdy, J. S. Marshall:
{\em The motion of a point vortex around multiple circular islands.}
Phys. Fluids {\bf 17} 056602 (2005).

\bibitem{crowdy-marshall:2006} D. G. Crowdy, J. S. Marshall:
{\em The motion of a point vortex through gaps in walls.}
J. Fluid Mech. {\bf 551}  (2006), 31--48.

\bibitem{delpino-etal:2005} M. del Pino, M. Kowalczyk, M. Musso:
{\em Singular limits in Liouville-type equations.}
Calc. Var. Part. Diff. Equ. {\bf 24} (2005), 47--81.

\bibitem{flucher:1999} M. Flucher:
{\em Variational problems with concentration.}
Birkh\"auser Verlag, Basel 1999.

\bibitem{flucher-gustafsson:1997} M. Flucher, B. Gustafsson:
{\em Vortex motion in two-dimensional hydromechanics.}
Preprint in TRITA-MAT-1997-MA-02.


\bibitem{kirchhoff:1876} G. Kirchhoff:
{\em Vorlesungen \"{u}ber mathematische Physik.}
Teubner, Leipzig 1876

\bibitem{kuhl:2015} C. Kuhl:
{\em Symmetric equilibria for the N-vortex-problem.}
J. Fixed Point Theory Appl. {\bf 17 (3)} (2015), 597--694.

\bibitem{kuhl:2016} C. Kuhl:
{\em Equilibria for the N-vortex-problem in a general bounded domain.}
J. Math. Anal. Appl. {\bf 433 (2)} (2016), 1531--1560.

\bibitem{Kurzke-etal:2011} M. Kurzke, C. Melcher, R. Moser, D. Spirn:
{\em Ginzburg-Landau vortices driven by the Landau-Lifshitz-Gilbert equation.}
Arch. Rat. Mech. Anal. {\bf 199} (2011), 843--888.

\bibitem{lin:1941a} C. C. Lin:
{\em On the motion of vortices in 2D I. Existence of the Kirchhoff-Routh function.}
Proc. Nat. Acad. Sc. {\bf 27} (1941), 570--575.

\bibitem{lin:1941b} C. C. Lin:
{\em On the motion of vortices in 2D II. Some further properties on the Kirchhoff-Routh function.}
Proc. Nat. Acad. Sc. {\bf 27} (1941), 575--577.

\bibitem{majda-bertozzi:2001} A. J. Majda, A. L. Bertozzi:
{\em Vorticity and Incompressible Flow.}
Cambridge University Press 2001.

\bibitem{marchioro-pulvirenti:1994}
C.~Marchioro, M.~Pulvirenti:
{\em Mathematical Theory of Incompressible Nonviscous Fluids.}
  Applied mathematical sciences {\bf 96},
Springer, New York, 1994.

\bibitem{micheletti-pistoia:2014} A. M. Micheletti, A. Pistoia:
{\em Non degeneracy of critical points of the Robin function with respect to deformations of the domain.}
Potential Anal. {\bf 40} (2014), 103--116.

\bibitem{moser:1976} J. Moser:
{\em Periodic solutions near an equilibrium and a theorem by A. Weinstein.}
Comm. Pure Appl. Anal. {\bf 29} (1976), 727--747.

\bibitem{newton:2001} P. K. Newton:
{\em The $N$-vortex problem.}
Springer-Verlag, Berlin 2001.

\bibitem{routh:1881} E. J. Routh: {\em Some applications of conjugate functions.}
Proc. London Mat. Soc. {\bf 12} (1881), 73--89.

\bibitem{ryzhov-koshel:2016} E. A. Ryzhov, K. V. Koshel:
{\em Steady and perturbed motion of a point vortex along a boundary
              with a circular cavity.}
              Phys. Lett. A {\bf 380} (2016), 896--902.

\bibitem{saffman:1992} P. G. Saffman: {\em Vortex dynamics.}
Cambridge University Press, Cambridge 1992.



\bibitem{Weinstein:1973} A. Weinstein:
{\em Normal modes for nonlinear Hamiltonian systems.}
Invent. Math. {\bf 20} (1973), 47--57.

\end{thebibliography}
\end{document}